\providecommand{\bysame}{\leavevmode\hbox to3em{\hrulefill}\thinspace}
\providecommand{\MR}{\relax\ifhmode\unskip\space\fi MR }
\providecommand{\href}[2]{#2}
\newtheorem{proposition}{Proposition}[section]
\newtheorem{theorem}[proposition]{Theorem}
\newtheorem{lemma}[proposition]{Lemma}
\newtheorem{corollary}[proposition]{Corollary}
\newtheorem{claim}[proposition]{Claim}
\newtheorem{definition}[proposition]{Definition}
\theoremstyle{remark}
\newtheorem{remark}[proposition]{Remark}
\newtheorem{examples}[proposition]{Examples}
\newtheorem*{remark*}{Remark}
\newtheorem*{remarks*}{Remarks}
\numberwithin{equation}{section}
\newcommand{\CH}{\mathcal{H}}
\newcommand{\R}{\mathbb{R}}
\newcommand{\defeq}{:=}
\newcommand{\wrwr}{\operatorname{\wr\wr}}
\begin{document}
\title[Metric approximations of unrestricted wreath products]{Metric approximations of unrestricted wreath products when the acting group is amenable}

\author[J. Brude, R. Sasyk]{Javier Brude $^{1,2}$ \MakeLowercase{and} Rom\'an Sasyk $^{1,3}$}

\address{$^{1}$Instituto Argentino de Matem\'atica Alberto P. Calder\'on-CONICET,
Saavedra 15, Piso 3 (1083), Buenos Aires, Argentina.}

\address{$^{2}$Departamento de Matem\'atica, Facultad de Ciencias Exactas, Universidad Nacional de la Plata, Argentina.}

\address{$^{3}$Departamento de Matem\'atica, Facultad de Ciencias Exactas y Naturales, Universidad de Buenos Aires, Argentina.}

\email{\textcolor[rgb]{0.00,0.00,0.84}{jbrude@mate.unlp.edu.ar}}
\email{\textcolor[rgb]{0.00,0.00,0.84}{rsasyk@dm.uba.ar}}

\subjclass[2010]{20E22, 20F65, 43A05}

\date{}

\keywords{Unrestricted wreath products; sofic groups; linear sofic groups; weakly sofic groups; hyperlinear groups; amenable groups}
\begin{abstract}	
We give a simple and unified proof showing that the unrestricted wreath product of a weakly sofic, sofic, linear sofic, or hyperlinear group by an amenable group is weakly sofic, sofic, linear sofic, or hyperlinear, respectively. By means of the Kaloujnine-Krasner theorem, this implies that  group extensions with amenable quotients preserve the four aforementioned metric approximation properties. We also discuss the case of co-amenable groups.
\end{abstract}

\maketitle
\section{introduction}
Given two  groups $G$ and $H$, their {\it unrestricted wreath product} $G\wrwr H$ is, by definition, the semidirect product 
$\left(\prod_H G\right )\rtimes_\theta H$,
where $H$ acts on the direct product $\prod_H G$ by shifting coordinates as follows: $\theta_{h}\big(\left (x_{\tilde h}\right)_{\tilde h\in H}\big):=\left (x_{\tilde h}\right)_{h\tilde h\in H}$, for $\left (x_{\tilde h}\right)_{\tilde h\in H}\in \prod_H G$. 
 The purpose of this article is to provide a simple and unified proof of the following statement.
\begin{theorem}\label{main teorema}
Let $H$ be an amenable group and let $G$ be a group. 
\begin{enumerate}
\item If $G$ is weakly sofic, $G\wrwr H$ is weakly sofic. \label{caso weak sofic}
\item If $G$ is sofic, $G\wrwr H$ is sofic. \label{caso  sofic}
\item If $G$ is linear sofic, $G\wrwr H$ is linear sofic. \label{caso linear  sofic}
\item If $G$ is hyperlinear, $G\wrwr H$ is hyperlinear. \label{caso hyperlineal}
\end{enumerate}
\end{theorem}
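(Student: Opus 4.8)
The plan is to unify the four cases by working with the single notion of \emph{$\mathcal{C}$-approximability}. Let $\mathcal{C}$ denote one of the classes: all finite groups with their bi-invariant metrics; the symmetric groups $(\mathrm{Sym}(m),\text{normalized Hamming})$; the groups $(\mathrm{GL}_m(\mathbb{C}),\text{normalized rank})$; the unitary groups $(U(m),\text{normalized Hilbert--Schmidt})$. Call $\Gamma$ \emph{$\mathcal{C}$-approximable} if for every finite $S\subseteq\Gamma$ and every $\varepsilon>0$ there are $(K,d)\in\mathcal{C}$ and $\varphi\colon\Gamma\to K$ with $\varphi(1)=1$, $d(\varphi(ab),\varphi(a)\varphi(b))<\varepsilon$ on $S$, and $d(\varphi(a),1)\geq c$ for $a\in S\smallsetminus\{1\}$, with $c>0$ depending only on $\mathcal{C}$ (by the standard amplification arguments this is equivalent to the properties in the statement). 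I would first recall three facts valid for each such $\mathcal{C}$: (a) $\mathcal{C}$-approximability passes to subgroups and is preserved by finite direct products; (b) every amenable group is $\mathcal{C}$-approximable, and in fact, for a sufficiently invariant finite Følner set $F\subseteq H$, the maps $\sigma\colon H\to\mathrm{Sym}(F)$ with $\sigma_h$ equal to left translation by $h$ on $F\cap h^{-1}F$ (and $\sigma_1=\mathrm{id}$) have error tending to $0$ on any fixed finite set and satisfy $d(\sigma_h,\mathrm{id})\geq 1-\delta$ whenever $h\neq e$; (c) for $(K,d)\in\mathcal{C}$ and a finite set $F$ the permutational wreath product $K\wr\mathrm{Sym}(F)$ again lies in $\mathcal{C}$, with a bi-invariant metric restricting to the \emph{average} of $d$ over the coordinates on $\prod_F K$ and to the Hamming metric on $\mathrm{Sym}(F)$ --- for the matrix classes this comes from the action on $F\times[m]$, on $\ell^2(F)\otimes\mathbb{C}^m$, or on $\mathbb{C}^F\otimes\mathbb{C}^m$, and for finite groups from an elementary lemma on length functions of wreath products.

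For the construction, fix a finite $S\subseteq G\wrwr H$ with $1\in S=S^{-1}$ and $\varepsilon>0$. Let $E\subseteq H$ be the (finite) image of $S^2$ and $\Xi\subseteq\prod_H G$ the (finite) set of $\prod_H G$-components of the elements of $S^2$. Choose a finite $(E,\delta)$-invariant $F\subseteq H$ with $\delta$ small and $\sigma$ as in (b). Using the section $s(h)=(1,h)$ of $G\wrwr H\twoheadrightarrow H$, set $\nu(\gamma,x):=s(\overline{\gamma}x)^{-1}\,\gamma\,s(x)\in\prod_H G$ for $\gamma\in G\wrwr H$, $x\in H$; one checks the cocycle identity $\nu(\gamma_1\gamma_2,x)=\nu(\gamma_1,\overline{\gamma_2}x)\,\nu(\gamma_2,x)$ and the explicit formula $\nu((\xi,h),x)=\bigl((\xi_{hxy})_{y\in H},1\bigr)$. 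Pick a finite $J\subseteq H$ such that for every $x\in F$ and every $(\xi,h)\in S\smallsetminus\{1\}$ the projection $\pi_J(\nu((\xi,h),x))\in\prod_J G$ is nontrivial, and large enough to separate the finitely many pairs of elements of $S$ that must be separated; for instance $J=\{x^{-1}s:x\in EF,\ s\in S_0\}$ works for a suitable finite $S_0\subseteq H$, and this $J$ is finite no matter how sparse the supports of the elements of $\Xi$ are. By (a) the finite product $\prod_J G$ is $\mathcal{C}$-approximable; choose $(K,d)\in\mathcal{C}$ and $\rho\colon\prod_J G\to K$ a good approximation on the finite set of all $\pi_J(\nu(\gamma,x))$ with $\gamma\in S^2$, $x\in EF$, together with their pairwise products. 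Finally define $\Phi\colon G\wrwr H\to K\wr\mathrm{Sym}(F)\in\mathcal{C}$ by letting $\Phi(\gamma)$ be the element acting on $F\times[m]$ (respectively $\ell^2(F)\otimes\mathbb{C}^m$, $\mathbb{C}^F\otimes\mathbb{C}^m$) by
\[
\Phi(\gamma)\colon(x,v)\longmapsto\bigl(\sigma_{\overline{\gamma}}(x),\ \rho(\pi_J(\nu(\gamma,x)))\,v\bigr).
\]
This is nothing but the Kaloujnine--Krasner embedding of the extension $\prod_H G\rightarrowtail G\wrwr H\twoheadrightarrow H$, with $H$ replaced by its Følner model $\sigma$ and $\prod_H G$ replaced by the $\mathcal{C}$-approximation $\rho$ of the finite sub-product that genuinely occurs.

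The verification splits as expected. Almost-multiplicativity combines the Følner estimates ($\sigma_{\overline{\gamma_1}}\sigma_{\overline{\gamma_2}}\approx\sigma_{\overline{\gamma_1\gamma_2}}$ and $\sigma_{\overline{\gamma_2}}(x)=\overline{\gamma_2}x$ off a $\delta$-fraction of $F$), the exact cocycle identity for $\nu$, and the almost-multiplicativity of $\rho$ on the fixed finite set, to conclude that $\Phi(\gamma_1\gamma_2)$ and $\Phi(\gamma_1)\Phi(\gamma_2)$ agree off an $O(\delta+\varepsilon')$-fraction of the underlying set. For the injectivity radius, take $1\neq\gamma=(\xi,h)\in S$. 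If $h\neq e$ then $\sigma_h$ displaces at least $(1-\delta)|F|$ of the $|F|$ blocks, whence $d(\Phi(\gamma),1)\geq 1-\delta$. If $h=e$ and $\xi\neq1$, then for \emph{every} $x\in F$ the element $\nu(\gamma,x)=s(x)^{-1}(\xi,1)s(x)$ is a nontrivial conjugate of $(\xi,1)$ inside $\prod_H G$ whose projection to $J$ is nontrivial by the choice of $J$, so $d(\rho(\pi_J(\nu(\gamma,x))),1)\geq c$ for every $x$; since the metric on $K\wr\mathrm{Sym}(F)$ restricts on $\prod_F K$ to the average over $x\in F$, this yields $d(\Phi(\gamma),1)\geq c$. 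As $S$ and $\varepsilon$ were arbitrary, $G\wrwr H$ is $\mathcal{C}$-approximable, which is the desired conclusion in each of the four cases.

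The crux --- and the only place where it matters that the wreath product is \emph{unrestricted} --- is precisely the case $h=e$ above. The naive approach, applying a $\mathcal{C}$-approximation of $G$ coordinatewise and forming an honest wreath product with the Følner model of $H$, fails: an element of $\prod_H G$ supported at a single coordinate would then displace only one of the $|F|$ blocks and contribute only $O(1/|F|)$ to the metric, which tends to $0$ because $F$ must be taken ever more invariant. The remedy is to feed the approximation the shifted views $(\xi_{hxy})_{y\in J}$ rather than the bare coordinates --- equivalently, to route everything through the cocycle $\nu$, i.e. the Kaloujnine--Krasner mechanism --- so that every block sees a full, hence nontrivial, translate of $\xi$ and no dilution occurs; this is affordable exactly because, with $S$ and $F$ fixed, only finitely many coordinates of $\prod_H G$ are ever involved, so only a $\mathcal{C}$-approximation of the \emph{finite} product $\prod_J G$ is needed. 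I expect the one genuinely case-specific point to be fact (c) for the class of finite groups (producing a suitable bi-invariant metric on $K\wr\mathrm{Sym}(F)$); for the three matrix classes (c) is immediate from the standard permutation/block action.
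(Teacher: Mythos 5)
Your proposal is correct in substance and shares the paper's overall architecture: a F\o lner model $\sigma\colon H\to Sym(B)$ of the acting group, a target of the form $K\wr_B Sym(B)$ in which each block carries an approximation of a \emph{shifted copy} of the whole base element (this is exactly the point that defeats the naive coordinatewise construction, as you correctly observe at the end), and case-by-case embeddings of $K\wr_B Sym(B)$ back into the class $\mathcal C$ with controlled metric distortion --- the paper's Section 4 supplies precisely your fact (c), with the caveat that for $GL_n$ and for $\mathcal U(\mathcal H)$ the metric $\tilde d$ on $K\wr_B Sym(B)$ is only \emph{equivalent} (up to a factor $2$, resp.\ up to a square root) to the induced one on the big matrix group; this is enough for the argument, but in the hyperlinear case it forces the almost-multiplicativity to be run at scale $\varepsilon^2$, which the paper does explicitly and you should too. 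The genuine divergence is in how the blocks are produced. You approximate the \emph{finite} product $\prod_J G$ and feed it $\pi_J(\nu(\gamma,x))$ through the cocycle $\nu$, which obliges you to choose $J$ so that every shifted view of every nontrivial base element survives the projection. The paper instead approximates the whole unrestricted product $\prod_H G$ at once --- legitimate because the uniform freeness constants pass to arbitrary direct products --- and simply applies $\varphi\circ\theta_{b^{-1}}$ to the base element, so no $J$, no section, and no cocycle are needed; the authors single this out as the main simplification over the earlier ``local'' constructions of Elek--Szab\'o and Arzhantseva--Berlai--Finn-Sell--Glebsky, which your route resembles (it is, in effect, the argument of their Section 5 rewritten for the split extension). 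Both are valid; yours trades the easy observation that freeness survives infinite products for the combinatorial bookkeeping of $J$.

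Two small points to repair. First, your requirement that $\pi_J(\nu((\xi,h),x))$ be nontrivial for \emph{every} $(\xi,h)\in S\smallsetminus\{1\}$ and every $x$ is unsatisfiable when $\xi=1$ and $h\neq e$, since then $\nu((\xi,h),x)=1$; you only need, and only use, this condition for the elements with $h=e$, so restrict it accordingly. Second, in the hyperlinear case the equivalence of $(F,\varepsilon,d_{\rm HS})$-injectivity with freeness for a uniform constant is R\u{a}dulescu's theorem rather than a formal amplification; the paper instead propagates the stronger trace-preserving condition through the construction (using that the trace of a block-permutation matrix is the average of the traces of the diagonal blocks fixed by $\sigma(h)$), which yields a sharper conclusion than the freeness route you take.
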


By means of the Kaloujnine-Krasner theorem, \cite{MR49891}, and by keeping in mind that
these four metric approximation properties pass to subgroups and are preserved under taking direct products, 
it is easy to see that Theorem \ref{main teorema} and the next result are equivalent.

\begin{corollary}[Extension Theorem]\label{coro on extensions} Let $G$ be a  group with a normal subgroup $N$ such that the quotient $G/N$ is amenable. 
If $N$ is weakly sofic, sofic, linear sofic, or hyperlinear;
then $G$ is  weakly sofic,  sofic, linear sofic, or hyperlinear, respectively. 
\end{corollary}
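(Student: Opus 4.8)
The plan is to derive Corollary~\ref{coro on extensions} from Theorem~\ref{main teorema} by means of the Kaloujnine--Krasner embedding theorem \cite{MR49891}, and to indicate the reverse implication so as to justify the equivalence asserted above. Throughout, let $\CP$ denote any one of the four properties in play (weakly sofic, sofic, linear sofic, hyperlinear).

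For the implication in the corollary, I would fix $N\trianglelefteq G$ with $Q:=G/N$ amenable and $N$ having $\CP$. The Kaloujnine--Krasner theorem furnishes an injective homomorphism $G\hookrightarrow N\wrwr Q=\left(\prod_Q N\right)\rtimes_\theta Q$: choosing a set-theoretic section $s\colon Q\to G$ of the quotient map, one sends $g$ to the pair $(\varphi_g,\bar g)$, where $\bar g\in Q$ is the class of $g$ and $\varphi_g\in\prod_Q N$ is the function $\varphi_g(x)=s(x)^{-1}g\,s(\bar g^{-1}x)$. It is essential here that the codomain is the \emph{unrestricted} wreath product, since $\varphi_g$ need not be finitely supported. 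Now Theorem~\ref{main teorema}, applied with acting group $Q$ (amenable) and base group $N$ (which has $\CP$), gives that $N\wrwr Q$ has $\CP$; since each of the four properties passes to subgroups, $G$ has $\CP$.

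For the converse — which together with the above yields the equivalence — I would start from an amenable group $H$ and a group $G$ with $\CP$, and consider $W:=G\wrwr H$. The base group $N:=\prod_H G$ is normal in $W$ with $W/N\cong H$ amenable, and $N$ is a direct product of copies of $G$, hence has $\CP$ because all four properties are preserved under arbitrary direct products. Applying Corollary~\ref{coro on extensions} to $N\trianglelefteq W$ then gives that $W=G\wrwr H$ has $\CP$, which is exactly Theorem~\ref{main teorema}.

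The deduction itself is short; the only points requiring care — and the closest thing to an obstacle — are bookkeeping ones: invoking Kaloujnine--Krasner in its unrestricted form, so that the embedding genuinely lands in $N\wrwr Q$ rather than in the restricted wreath product, and using the permanence of weak soficity, soficity, linear soficity and hyperlinearity both under passage to subgroups and under \emph{arbitrary} (not merely finite) direct products. Both facts are standard and documented in the literature for each of the four properties, so the mathematical substance of the paper resides entirely in Theorem~\ref{main teorema}.
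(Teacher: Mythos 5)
Your proposal is correct and follows essentially the same route as the paper: the authors likewise deduce Corollary~\ref{coro on extensions} (and its equivalence with Theorem~\ref{main teorema}) from the Kaloujnine--Krasner embedding $G\hookrightarrow N\wrwr (G/N)$, combined with the facts that the four properties pass to subgroups and are preserved under arbitrary direct products (the latter via finite products plus freeness). Your explicit section formula and the converse direction match the intended argument, so there is nothing to correct.
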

The sofic and hyperlinear cases of  Theorem \ref{main teorema} have been proved by Arzhantseva, Berlai, Finn-Sell, and Glebsky in \cite{MR3934790}, where they also gave the application of the Kaloujnine-Krasner theorem mentioned above. Some of the extension results are older. Indeed, the sofic one is due to Elek and Szabo in \cite{ELE03} and the linear sofic one is due to Arzhantseva and P\u{a}unescu in \cite{MR3592512}. More recently, in  \cite{MR3632893}, Holt and Rees showed that certain metric approximations on groups, including weakly sofic, are preserved under taking extensions with amenable quotients.  In a slightly different direction, in \cite{HAY18}, Hayes and Sale proved that the {\it restricted wreath product} of a group $G$ having one of the metric approximation properties listed in Theorem \ref{main teorema} by an acting sofic group, preserves the approximation property of $G$.

 In \cite[$\S$4.3]{MR3934790} the authors explained why their techniques could not deal with the weakly sofic case of Theorem  \ref{main teorema}. 
 The motivation of the present article was to see whether ideas we used in \cite[$\S$5]{BS2019}, some of which can be traced back to \cite{HAY18,MR3632893},
would serve to give a direct proof of this fact, without requiring the result on extensions of \cite{MR3632893}. 
Here we achieve this in a self-contained manner that also allows us to deal with the four cases of Theorem \ref{main teorema} in a unified way. 
The reasons that make our proof simple are, on the one hand, the systematic use 
 of the notion of abstract metric approximations, considered first by
Arzhantseva in 2008 (see  \cite{MR3644759})
and on the other hand, that we deal directly with metric approximations of $\prod_H G$ rather than building them locally from metric approximations of $G$. This is done in Section \ref{S3}.
The last ingredient in our proof, which might be useful in other contexts, appears in Section \ref{S4}. There we provide explicit group monomorphisms from certain auxiliary metric groups constructed in Section \ref{S3}, to
 metric groups belonging to the classes defining the metric properties of Theorem \ref{main teorema}, in a way that the metric is distorted in a controlled manner. 
 
 The Extension Theorem \ref{coro on extensions} admits a more general form where $N$ is replaced by a (not necessarily normal) subgroup $H$ and the hypothesis of the amenability of $G/N$ are replaced by the assumption that the left regular action of $G$ on $G/H$ is amenable. In the literature, this is  sometimes referred as $H$ being {\it co-amenable} in $G$, (see, for instance, \cite{MR1999183}), and this is how we will call it in this article.
 In Section \ref{S5} we discuss why even the more general version of the  Kaloujnine-Krasner theorem \cite{MR49892}, might not be suitable
to prove this variant for co-amenable subgroups. Nonetheless,  we provide a unified proof of this more general form of the Extension Theorem by adapting the work done with permutational wreath products in Section \ref{S3} and in a manner which allows us to use the results of Section \ref{S4}.
\section{Metric approximation in groups}
In this section we give a very brief account of some basic notions of metric approximation in groups needed in this article.
 Given a group $G$, a map $\delta:G\setminus\{1\}\to (0,\infty)$ is called a {\it weight function} for $G$.

\begin{definition} Let $G$ be a group with a weight function $\delta$ and let $K$ be a group with a bi-invariant metric $d$.
 Given $F\subseteq G$, $\varepsilon>0$, and a map $\phi:G\to K$ such that $\phi (1)=1$, we say that
\begin{enumerate} 
\item $\phi$ is $(F,\varepsilon,d)$-multiplicative  if
$d(\phi(g)\phi(g'),\phi(gg')) < \varepsilon$ for all  $g,g'\in F$;
\item $\phi$ is $(F,\delta,d)$-injective if 
 $d(\phi(g),1) \geq  \delta(g)$ for all  $g\in F\setminus\{1\}$;
 \item $\phi$ is $(F,\varepsilon,d)$-free for a function $\rho(\varepsilon)$ 
  if $d(\phi(g),1) \geq  \rho(\varepsilon)$, for all  $g\in F\setminus\{1\}$.
\end {enumerate} 
\end{definition}
\begin{definition}[{\cite[Definition 1.6]{MR2900231}}] Let $\mathcal C$ be a class of groups with bi-invariant metrics.
 A group $G$ is $\mathcal C$-approximable or has the  $\mathcal C$-approximation property, if it has a weight function $\delta$ such that, for each  finite set $F\subseteq G$ and each 
 $\varepsilon>0$ there exist $(K,d)\in \mathcal C$ and an $(F,\varepsilon,d)$-multiplicative function $\phi:G\to K$ which is also $(F,\delta,d)$-injective. 
\end{definition}

A {\it countable} group is $\mathcal C$-approximable if and only if it is embeddable  in a metric ultraproduct of groups in $\mathcal C$. Hence, the importance of this notion comes from its relation to major open challenges in mathematics like the Connes' embedding problem for group von Neumann algebras and the Gottschalk's surjunctivity conjecture, (see \cite [Proposition 1.8]{MR2900231} and references therein).

\begin{examples}\label{ejemplos} We list the four metric approximation properties studied in this article.
\begin{enumerate}
\item \label {w sofico}A group is {\it weakly sofic}, \cite{MR2455513}, if it is $\mathcal C$-approximable when $\mathcal C$ is the class of finite groups with bi-invariant metrics. 
\item \label{sofico}A group is {\it sofic}, \cite{MR1803462,ELE03}, if it is $\mathcal C$-approximable when $\mathcal C$ is the class of finite symmetric groups endowed with the normalized Hamming distance 
$$d_{\rm{Hamm}}(\sigma,\tau):= \frac{1}{| A|}|\{a\in  A: \sigma(a)\neq \tau(a)\}| \text{ , for } \sigma, \tau\in Sym( A).$$
\item \label{lineal sofico}A group is  {\it linear sofic}, \cite{MR3592512}, if it is $\mathcal C$-approximable when $\mathcal C$ is the class of invertible matrices of finite rank on a field $K$ 
endowed with the rank distance given by 
$$d_{{\rm rk}}(A,B):=\frac{1}{n}{\rm rank }(A-B)  \text{ , for } A,B \in GL_{n}(K).\\$$
\item \label{hiperlin} A group is  {\it hyperlinear}, \cite{MR2436761,MR2072092}, if it is $\mathcal C$-approximable when $\mathcal C$ is the class of unitary matrices on a finite dimensional Hilbert space endowed with the Hilbert-Schmidt distance, $d_{{\rm HS}}$, that is induced by the Hilbert-Schmidt norm
$$\Vert A \Vert_2:=\sqrt{\frac{1}{n}\sum_{i,j=1}^{n}
|\langle Av_i,v_j\rangle | ^2 } \text{ , for } A \in \mathcal B(\mathcal H), \text{ where }\{v_1,\ldots,v_n\} \text{ is  an ONB of }\mathcal H.$$
\end{enumerate}
\end{examples}

  In these examples, $(F,\delta,d)$-injectivity can be replaced by a more manageable condition. Rather than requiring a weight function $\delta$ that depends on $G$, this gets replaced in each case as follows:
\begin{enumerate}
\item   {\it weakly sofic:} $\phi$ is $(F,\varepsilon,d)$-free for a constant function $\rho(\varepsilon):=\alpha \in \R_{>0}$, (this follows easily from the definition).
By scaling the metric $d$, it can be assumed that $\alpha=1$ and ${\rm diam}(K)=1$;
\item   {\it sofic:} $\phi$ is $(F,\varepsilon,d_{{\rm Hamm}})$-free for the function $\rho(\varepsilon):=1-\varepsilon$, 
(see \cite[Proposition 4.4]{MR2089244});
\item {\it linear sofic:} $\phi$ is $(F,\varepsilon,d_{{\rm rk}})$-free for the function $\rho(\varepsilon):=1/4-\varepsilon$, (see \cite[Proposition 5.13]{MR3592512});
\item  {\it  hyperlinear:}\label{trace preserving} $\phi$ is $(F,\varepsilon,d_{{\rm HS}})$-trace preserving, namely  $|tr(\phi(g))| \leq \varepsilon$ for all  $g\in F\setminus\{1\}$,
where, for $ A \in \mathcal B(\mathcal H)$ and $\{v_1,\ldots,v_n\}$  an ONB of $\mathcal H$, 
$tr(A):=\frac{1}{n}\sum_{i=1}^{n} \langle Av_i,v_i\rangle$, (see \cite[Proposition 2.5]{MR2436761}, and Remark \ref{about radulescu theorem} in this article).
\end{enumerate}
One of the advantages of having these alternative definitions is that each one is a uniform condition, in the sense that, unlike a weight function, they do not depend on the particular group $G$ that is $\mathcal C$-approximable. For instance,  it is easy to show that the four metric approximations given in Examples \ref{ejemplos} are preserved under taking finite direct products. But then, freeness implies that they are also preserved under taking direct products. 
\section{Approximations using permutational wreath products}\label{S3}
Given a group $K$ with a bi-invariant metric $d$ for which ${\rm diam}(K)\leq1$ and a finite set $B$, 
consider the {\it permutational wreath product} $K\wr_B Sym(B)$.
On a few occasions we will denote with a dot ``$\cdot$'' the permutational action of $Sym(B)$ on $\bigoplus_{B} K$.  
In \cite[$\S$5]{MR3632893} (see also \cite[Proposition 2.9]{HAY18}) the authors introduced the following function 
\begin{equation}\label{metric in permut. wreath product}\tilde d(((x_b)_{b\in B},\tau),((y_b)_{b\in B},\rho) ):=
d_{\text{Hamm}}(\tau,\rho)+\frac{1}{|B|}\sum_{\stackrel{b\in B}{\tau(b)=\rho(b)}} d(x_{\tau(b)},y_{\tau(b)})
\end{equation}
and showed that it is a bi-invariant metric in $K\wr_B Sym(B)$, and with this metric ${\rm diam}(K\wr_B Sym(B))=1$.

\begin{proposition}\label{weak sofic permanence}
Let $H$ be an amenable group and
let $G$ be a  group with the property that for every $\varepsilon'>0$ and for every finite set $F_{\,\prod_{H}G}\subseteq \prod_H G$,  
there exist a group  $(K,d)\in \mathcal C$ of diameter bounded by $1$ and a function 
$\varphi: \prod_H G\to K$ with $\varphi(1)=1$ that is $(F_{\,\prod_{H}G},\varepsilon', d)$-multiplicative 
and $(F_{\,\prod_{H}G},\varepsilon', d)$-free for certain function $\rho(\varepsilon')$, independent of $K$.

Then, given $\varepsilon>0$ and a finite set $F\subseteq G\wrwr H$, there exist a finite set $B\subseteq H$, 
a group $(K,d)\in \mathcal C$ and a function 
$\Phi: G\wrwr H\to K\wr_B Sym( B)$ that is $(F,\varepsilon,\tilde d)$-multiplicative 
and $(F,\varepsilon, \tilde d)$-free
for the function $\tilde\rho(\varepsilon):=\min(1-\varepsilon/3, \rho(\varepsilon/3))$.
\end{proposition}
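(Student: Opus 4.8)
The plan is to build $\Phi$ directly from the given approximation $\varphi$ of $\prod_H G$ together with a F{\o}lner approximation of the left-translation action of $H$ on itself. First I would record the combinatorial data of $F$: writing each element of $F$ as a pair $(f,h)$ with $f\in\prod_H G$ and $h\in H$, let $F_H\subseteq H$ be the finite set of second coordinates and $F_0\subseteq\prod_H G$ the finite set of first coordinates. Using the amenability of $H$, choose a finite set $B\subseteq H$ that is $(S,\eta)$-invariant for a symmetric finite set $S\supseteq F_H\cup F_HF_H\cup\{1\}$ and a small $\eta>0$ to be fixed at the end. For each $h\in H$ the partial map $b\mapsto hb$ restricts to a bijection from $B\cap h^{-1}B$ onto $hB\cap B$, two subsets of $B$ whose complements in $B$ have size at most $\eta|B|$ whenever $h\in S$; I would extend it arbitrarily to a permutation $\pi(h)\in Sym(B)$, with $\pi(1)=\mathrm{id}_B$. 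A routine F{\o}lner estimate then shows that $\pi$ is $(F_H,C\eta,d_{\rm Hamm})$-multiplicative and, since a nontrivial left translation of $H$ has no fixed points, that $d_{\rm Hamm}(\pi(h),\mathrm{id}_B)\ge 1-C\eta$ for every $h\in F_H\setminus\{1\}$, for an absolute constant $C$.

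Next I would invoke the hypothesis on $G$ with $\varepsilon'\defeq\varepsilon/3$ and with $F_{\prod_H G}$ the finite set of all $\theta_{b^{-1}}(f)$ and all $\theta_{b^{-1}h}(f)$ for $b\in B$, $h\in F_H$, $f\in F_0$, obtaining $(K,d)\in\mathcal C$ with $\mathrm{diam}(K)\le 1$ and $\varphi\colon\prod_H G\to K$ that is $(F_{\prod_H G},\varepsilon',d)$-multiplicative and $(F_{\prod_H G},\varepsilon',d)$-free for $\rho$. I would then define
$$\Phi\colon G\wrwr H\to K\wr_B Sym(B),\qquad \Phi(f,h)\defeq\Bigl(\bigl(\varphi(\theta_{b^{-1}}(f))\bigr)_{b\in B},\ \pi(h)\Bigr),$$
which satisfies $\Phi(1)=1$ since $\varphi(1)=1$ and $\pi(1)=\mathrm{id}_B$. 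Because $\theta$ is an action of $H$, for $(f,h),(f',h')\in F$ the base component of $\Phi\bigl((f,h)(f',h')\bigr)$ at $b$ is $\varphi\bigl(\theta_{b^{-1}}(f)\cdot\theta_{b^{-1}h}(f')\bigr)$, while that of $\Phi(f,h)\Phi(f',h')$ is $\varphi(\theta_{b^{-1}}(f))\cdot\varphi\bigl(\theta_{(\pi(h)^{-1}(b))^{-1}}(f')\bigr)$. Outside a set of $b$'s of size at most $C\eta|B|$ one has $\pi(h)^{-1}(b)=h^{-1}b$, so these two elements of $K$ agree up to $\varepsilon'$ by multiplicativity of $\varphi$, while on the exceptional coordinates they differ by at most $\mathrm{diam}(K)\le1$, and the permutation parts differ by at most $C\eta$ in $d_{\rm Hamm}$; feeding this into the definition of $\tilde d$ and choosing $\eta$ so that $C\eta<\varepsilon/3$ gives $\tilde d\bigl(\Phi((f,h)(f',h')),\Phi(f,h)\Phi(f',h')\bigr)<\varepsilon$.

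For freeness, take $(f,h)\in F\setminus\{1\}$. If $h\ne1$ then $d_{\rm Hamm}(\pi(h),\mathrm{id}_B)\ge1-\varepsilon/3$, hence $\tilde d(\Phi(f,h),1)\ge1-\varepsilon/3$. If $h=1$ and $f\ne1$, pick $\tilde h_0$ with $f_{\tilde h_0}\ne1$; then the $(b^{-1}\tilde h_0)$-coordinate of $\theta_{b^{-1}}(f)$ equals $f_{\tilde h_0}$, so $\theta_{b^{-1}}(f)\ne1$ for every $b\in B$, and $(F_{\prod_H G},\varepsilon',d)$-freeness of $\varphi$ gives $d(\varphi(\theta_{b^{-1}}(f)),1)\ge\rho(\varepsilon/3)$ for all $b$; since $\pi(1)=\mathrm{id}_B$, we get $\tilde d(\Phi(f,1),1)=\tfrac1{|B|}\sum_{b\in B}d(\varphi(\theta_{b^{-1}}(f)),1)\ge\rho(\varepsilon/3)$. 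In both cases $\tilde d(\Phi(f,h),1)\ge\min(1-\varepsilon/3,\rho(\varepsilon/3))=\tilde\rho(\varepsilon)$. The step I expect to require the most care is the bookkeeping of the non-F{\o}lner coordinates: one must pick $S$ large enough and $\eta$ small enough that the three error sources --- the $d_{\rm Hamm}$-defect of the permutation parts, the $\varphi$-multiplicativity defect on the F{\o}lner-good coordinates, and the trivial $\mathrm{diam}(K)\le1$ bound on the remaining coordinates --- each stay below $\varepsilon/3$, and one must keep $F_{\prod_H G}$ large enough to contain every translate actually fed into the multiplicativity and freeness hypotheses of $\varphi$.
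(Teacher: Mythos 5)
Your proposal is correct and follows essentially the same route as the paper: the same F{\o}lner-based quasi-action of $H$ on a finite set $B$, the same map $\Phi(f,h)=\bigl((\varphi(\theta_{b^{-1}}(f)))_{b\in B},\sigma(h)\bigr)$, the same split of $B$ into F{\o}lner-good coordinates (controlled by multiplicativity of $\varphi$) and a small exceptional set (controlled by ${\rm diam}(K)\le 1$), and the same two-case freeness argument. The only cosmetic differences are that you enlarge $F_{\prod_H G}$ to include the translates $\theta_{b^{-1}h}(f)$ explicitly, whereas the paper observes these are already of the form $\theta_{b'^{-1}}(f)$ with $b'\in B$ on the good coordinates, and that you leave the constant-tracking as deferred bookkeeping.
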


\begin{remark} 
\begin{enumerate}
\item The hypothesis on $G$ in Proposition \ref{weak sofic permanence} is satisfied if $G$ is
$\mathcal C$-approximable, when $\mathcal C$ is any of the families in  Examples \ref{ejemplos},
 (see $\S$\ref{subsection hiperlinear} for a discussion on the hyperlinear case).
\item
The most natural hypothesis on $G$ would have been that $\prod_H G$ is $\mathcal C$-approximable. 
However, with this condition, given the weight function $\delta$ for $\prod_H G$, our proof would only produce something like a weight 
function for $G\wrwr H$ that depends on $B$ and $\varepsilon$, and this is not enough to conclude  $(F,\delta,\tilde d)$-injectivity  for $G\wrwr H$. 
\item 
The obstruction just mentioned gives a partial indication of why our proof can not be adapted to show, for instance,  that the class of MF groups, 
(namely groups that are 
$\mathcal C$-approximable when $\mathcal C$ is the class of unitary matrices endowed with the operator norm, see \cite[Definition 2.7]{MR3049883} and \cite[$\S$4]{Thom-ICM}) is stable under taking extensions with amenable quotients.
\item It was drawn to our attention that Proposition \ref{weak sofic permanence} is similar to \cite[Theorem 5.1]{MR3632893}. The main differences in the statements are that we deal with the concrete group $G\wrwr H$ rather than with extensions with amenable quotients, (and this will simplify the proof), that we use the notion of {\it freeness} rather than the in principle more restrictive notion of {\it discrete $\mathcal C$-approximation} introduced in \cite{MR3632893}, and that we do not require stability of the class $\mathcal C$ under permutational wreath products. These last two differences indicates why 
\cite[Theorem 5.1]{MR3632893} serves to prove only the weakly sofic case of Theorem \ref{main teorema}.
Finally, the statement and the proof of Proposition \ref{weak sofic permanence} will easily be adapted to get a stronger conclusion in the hyperlinear case, (see Remark  \ref{about radulescu theorem} and the proof Theorem \ref{main teorema}\eqref{caso hyperlineal}).
\end{enumerate}
\end{remark}
\begin{proof}[Proof of Proposition \ref{weak sofic permanence}]
Call $\text{proj}_{\,\prod_{H}G}$ and $\text{proj}_{H}$ the projection maps from $G\wrwr H$ to $\prod_{H}G$ and $H$, respectively.
Let $F \subseteq G\wrwr H$ be a finite subset and let $\varepsilon >0$. Define $F_0\defeq F \cup \{1\}\cup F^{-1}$ and let 
$F_H:=  {\rm proj}_H(F_0)$.
Since $H$ is amenable, there exists a finite subset $B\subseteq H$ such that
\begin{align}\label{folner}
    \frac{|hB \bigtriangleup B|}{|B|}\leq \varepsilon/6\,,\,\,\,\,  \text{ for all } h \in F_H^2:=F_H . F_H
\end{align}
Let $\sigma: H \to Sym(B)$ be defined as
\begin{equation}\label{def of sigma}
       \sigma(h)b := \begin{cases} hb &\text{ if } hb \in B 
        \\
        \gamma_h {(hb)} &\text{ if not, where }  
       \gamma_h : hB \setminus B \to B \setminus hB \text{ is a fixed bijection}. 
         \end{cases} 
\end{equation} 
It is easy to see that 
$\sigma$ is $(F_H, \varepsilon/3,d_{{\rm Hamm}})$-multiplicative and $(F_H, \varepsilon/3,d_{{\rm Hamm}})$-free 
for the function $\rho'(\varepsilon/3)=1-\varepsilon/3$.

Let $\theta$ denote the shift action of $H$ on $\prod_H G$. In what follows it will be handy to keep in mind that since $\theta$ is an action, 
 $\theta_h\theta_{h'}=\theta_{hh'}$ for all $h,h'\in H$.
 
  By the hypothesis on the group $\prod_{H}G$, given the finite set
\begin{equation}\label{E sub g}
F_{\,\prod_{H}G}
\defeq \bigcup_{\substack{ x \in \text{proj}_{\prod_{H}G}(F_0)\\ b\in B}} \theta_{b^{-1}}(x),
\end{equation}
there exist a  group  $(K,d)\in \mathcal C$ of ${\rm diam}(K)\leq1$ and a function $\varphi:\prod_{H} G \to K$ with  $\varphi(1)=1$ 
that is $(F_{\,\prod_{H}G},\varepsilon/3,d)$-multiplicative and
$(F_{\,\prod_{H}G},\varepsilon/3,d)$-free for a function $\rho$ independent of $K$.
Define 
\begin{align*}
\Phi: G \wrwr H&\to K\wr_B Sym(B)\\
(x,h)&\mapsto \left((\varphi \theta_{b^{-1}}(x))_{b \in B}, \sigma(h)\right) 
\end{align*}

{\bf Claim:} $\Phi$ is $(F_0,\varepsilon,\tilde d)$-multiplicative and $(F_0,\varepsilon ,\tilde d)$-free for the function 
$\tilde\rho(\varepsilon)=\min(1-\varepsilon/3, \rho(\varepsilon/3))$.

We will first prove that $\Phi$ is $(F_0,\varepsilon,\tilde{d})$-multiplicative. To that end, take $(x,h),(x',h')\in F_0$.
On the one hand
\begin{align*}
\Phi\left((x,h)(x',h')\right)=\Phi\left(x\theta_h(x'),hh'\right)= \left(\left(\varphi\left(\theta_{b^{-1}}(x)\theta_{b^{-1}h}(x')\right)\right)_{b\in B},\sigma(hh')\right)
\end{align*}
and on the other hand
\begin{align*}
\Phi(x,h)\Phi(x',h') &=\Big(\left(\varphi\left(\theta_{b^{-1}}(x)\right)\right)_{b\in B} , \sigma(h)\Big)
\Big(\left(\varphi\left(\theta_{b^{-1}}(x')\right)\right)_{b\in B} , \sigma(h')\Big )\\
&=\left(\left(\varphi\left(\theta_{b^{-1}}(x)\right)\varphi\left(\theta_{(\sigma(h)^{-1}b)^{-1}}(x')\right)\right)_{b\in B},\sigma(h)\sigma(h')\right).
\end{align*}
Then
 \begin{align}\label{Suma Grande}
  &\tilde d \big(\Phi((x,h)(x',h')),\Phi(x,h)\Phi(x',h') \big)
=d_{\rm{Hamm}}(\sigma(hh'),\sigma(h)\sigma(h')) \  \ \ \ + \\
&\ \ \ \ +\,\,\frac{1}{|B|}  \sum_{\substack{b\in  B \\ \sigma(hh')b =\sigma(h)\sigma(h')b}}
d\left(\varphi\left(\theta_{(\sigma(hh')b)^{-1}}(x)\theta_{(\sigma(h)\sigma(h')b)^{-1}h}(x')\right),\varphi\left(\theta_{(\sigma(hh')b)^{-1}}(x)\right)
\varphi\left(\theta_{(\sigma(h')b)^{-1}}(x')\right)\right).\nonumber
\end{align}
Since $\sigma$ is $(F_H,\varepsilon/3, d_{\rm{Hamm}})$-multiplicative, it follows that 
$d_{\rm{Hamm}}(\sigma(hh'),\sigma(h)\sigma(h')) <\varepsilon/3$.
It only remains to bound the second summand of \eqref{Suma Grande}.
 To that end, we will partition the set $B$ in two disjoint subsets, one in which we can control the
 sum because all its summands are small, and another in which we can control the sum because the subset itself is small and ${\rm diam}(K)\leq 1$.

On the one hand, if  $h\sigma(h')b\in B$ then by the definition of $\sigma$ given in \eqref{def of sigma}, 
 we have that $\sigma(h)\sigma(h')b=h\sigma(h')b$, and so $(\sigma(h)\sigma(h')b)^{-1}h=(\sigma(h')b)^{-1}$.
Since, by \eqref{E sub g}, $\theta_{(\sigma(hh')b)^{-1}}(x), \theta_{(\sigma(h')b)^{-1}}(x')\in F_{\,\prod_{H}G}$,
we conclude that 
for all  $b\in \sigma(h')^{-1}(B\cap h^{-1}B)$ 
\begin{equation} \label{cota en sumandos chicos}
d\left(\varphi(\theta_{(\sigma(hh')b)^{-1}}(x)\theta_{(\sigma(h)\sigma(h')b)^{-1}h}(x')),\varphi\left(\theta_{(\sigma(hh')b)^{-1}}(x)\right)
\varphi(\theta_{(\sigma(h')b)^{-1}}(x'))\right)<\varepsilon/3.
\end{equation}
On the other hand, if $b\in \sigma(h')^{-1}(B\setminus h^{-1}B)$, this subset  is small because by \eqref{folner}
we have that 
\begin{equation}\label{cota en conjunto chico}
|\sigma(h')^{-1}(B\setminus h^{-1}B)|=|B\setminus h^{-1}B|<\frac{\varepsilon}{6}|B|.
\end{equation}
Partitioning the set $B$ in the disjoint subsets $ \sigma(h')^{-1}(B\cap h^{-1}B)$ and $\sigma(h')^{-1}(B\setminus h^{-1}B)$ and
replacing \eqref{cota en sumandos chicos} and \eqref{cota en conjunto chico} in \eqref{Suma Grande} gives
$\tilde d \big(\Phi((x,h)(x',h')),\Phi(x,h)\Phi(x',h') \big) <5/6\,\varepsilon.$

Let us now prove that $\Phi$ is $(F_0,\varepsilon,\tilde d)$-free for the function $\tilde\rho$.
If $h\in F_H\setminus\{1\}$ then
$$\tilde d(\Phi(x,h),1) \geq d_{\text{Hamm}}(\sigma(h),1) \geq 1-\varepsilon/3\geq \tilde\rho(\varepsilon).$$
We are left to show that $\Phi$ is $(F_0,\varepsilon,\tilde d)$-free for the function $\tilde\rho$ in the case when $(x,1)\in F_0\setminus\{1\}$.
Since $x\neq 1$, then $\theta_{b^{-1}}(x) \in F_{\,\prod_{H}G} \setminus \{1\}$. Hence
 $\tilde d (\Phi(x,1),1) =
 \frac{1}{|B|}\sum_{b\in B} d(\varphi  \theta_{b^{-1}}(x),1) \geq
 \rho(\varepsilon/3)\geq\tilde\rho(\varepsilon).$
\end{proof}
\begin{remark} In the proof we never used that $\theta$ is the shift action of $H$ on $\prod_H G$. In fact the very same statement and proof hold true
if we replace $\prod_H G$ by a group, call it $E$, and we replace $G\wrwr H$ by a semi-direct product $E\rtimes_\theta H$. As it was discussed in the introduction, the Kaloujnine-Krasner theorem implies that both statements are equivalent. 
In Section \ref{S5}, we  explain why even a stronger version of the Kaloujnine-Krasner theorem does not seem to be suitable to prove the stronger version of Corollary \ref{coro on extensions} 
 when the assumptions of $N$ being a normal subgroup of $G$ and $G/N$ being amenable are replaced by the hypothesis of $G$ having a co-amenable subgroup $H$. In that section, we instead prove a more technical variant of Proposition \ref{weak sofic permanence} which must avoid wreath products and semi-direct products.

\end{remark}
\section{Group monomorphisms which control the metrics and proof of Theorem \ref{main teorema}}\label{S4}
In order to prove  Theorem \ref{main teorema},  we apply Proposition  \ref{weak sofic permanence} to the  classes $\mathcal C$ of Examples \ref{ejemplos} together with group homomorphisms built in each case to carry the metric structure from $(K\wr_B Sym( B),\tilde d)$ with $(K,d)\in \mathcal C$ to a group in the class $\mathcal C$ in a controlled manner. 

\begin{proof}[Proof of Theorem \ref{main teorema}(\ref{caso weak sofic})]
If $G$ is weakly sofic, then $\prod_H G$ is weakly sofic and  it satisfies the hypothesis of  Proposition \ref{weak sofic permanence}
when  $\rho$ is equal to the constant function $1$.
Hence, given $\varepsilon>0$ and a finite set $F\subseteq G\wrwr H$, 
there exist a finite set $B\subseteq H$, a  finite group $K$ with a bi-invariant metric $d$, ${\rm diam}(K)=1$, and a function 
$\Phi: G\wrwr H\to K\wr_B Sym( B)$ that is $(F,\varepsilon,\tilde d)$-multiplicative and $(F,\varepsilon, \tilde d)$-free for 
$\tilde \rho(\varepsilon)=1-\varepsilon/3\geq 2/3$. Since $K\wr_B Sym( B)$ is a finite group, this concludes the proof.
\end{proof}
\begin{remark} The proof is constructive in the sense that starting with a given weakly sofic approximation of $G$, 
one can explicitly construct a weakly sofic approximation of  $\prod_H G$, and the proof provides a weakly sofic approximation of $G\wrwr H$.
This is also the case in the remaining instances of Theorem \ref{main teorema}.
\end{remark}
\subsection{The sofic case.}
We will need the next lemma, a variant of which is present in \cite{HAY18}.
\begin{lemma} \label{isometria sofic}
Let $A,B$ be finite sets. The function
\begin{align*}
\psi: (Sym(A) \wr_B Sym(B),\tilde{d}) &\to (Sym(A\times B),d_{\rm{Hamm}})\\
		\psi(\alpha,\beta)(a,b)&:=(\alpha_{\beta(b)}(a),\beta(b))
\end{align*}
is an isometric group homomorphism.
\end{lemma}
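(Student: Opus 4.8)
The plan is to verify directly that $\psi$ is a group homomorphism and that it is distance-preserving. I would start by fixing notation: an element of $Sym(A)\wr_B Sym(B)$ is a pair $(\alpha,\beta)$ with $\alpha=(\alpha_b)_{b\in B}\in \bigoplus_B Sym(A)$ and $\beta\in Sym(B)$, and the product rule is $(\alpha,\beta)(\alpha',\beta')=\big((\alpha_b\,\alpha'_{\beta^{-1}(b)})_{b\in B},\,\beta\beta'\big)$, using the permutational action $(\beta\cdot\alpha')_b=\alpha'_{\beta^{-1}(b)}$. First I would check $\psi$ is well-defined, i.e. that $\psi(\alpha,\beta)$ is a bijection of $A\times B$: it sends $(a,b)\mapsto(\alpha_{\beta(b)}(a),\beta(b))$, which is the composite of the bijection $(a,b)\mapsto(a,\beta(b))$ followed by the "fiberwise" bijection $(a,c)\mapsto(\alpha_c(a),c)$, hence a bijection. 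I would also note $\psi(\mathrm{id},\mathrm{id})=\mathrm{id}$.

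Next I would prove multiplicativity. Applying $\psi\big((\alpha,\beta)(\alpha',\beta')\big)$ to $(a,b)$ gives $\big((\alpha\beta\cdot\alpha')_{\beta\beta'(b)}(a),\,\beta\beta'(b)\big)=\big(\alpha_{\beta\beta'(b)}\,\alpha'_{\beta^{-1}(\beta\beta'(b))}(a),\,\beta\beta'(b)\big)=\big(\alpha_{\beta\beta'(b)}(\alpha'_{\beta'(b)}(a)),\,\beta\beta'(b)\big)$. On the other side, $\psi(\alpha,\beta)\big(\psi(\alpha',\beta')(a,b)\big)=\psi(\alpha,\beta)\big(\alpha'_{\beta'(b)}(a),\beta'(b)\big)=\big(\alpha_{\beta(\beta'(b))}(\alpha'_{\beta'(b)}(a)),\,\beta(\beta'(b))\big)$, which is the same. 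So $\psi$ is a homomorphism.

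Finally, the isometry. Since both metrics are bi-invariant (the $\tilde d$ side by the result quoted around \eqref{metric in permut. wreath product}, the Hamming side standard) and $\psi$ is a homomorphism, it suffices to compare $\tilde d\big((\alpha,\beta),(\mathrm{id},\mathrm{id})\big)$ with $d_{\mathrm{Hamm}}\big(\psi(\alpha,\beta),\mathrm{id}\big)$. The point $(a,b)$ is moved by $\psi(\alpha,\beta)$ exactly when $\beta(b)\neq b$, or $\beta(b)=b$ and $\alpha_{b}(a)\neq a$. Summing over $A\times B$ and dividing by $|A||B|$: the contribution of $b$ with $\beta(b)\neq b$ is $|\{b:\beta(b)\neq b\}|/|B|=d_{\mathrm{Hamm}}(\beta,\mathrm{id})$, while the contribution of the fixed $b$'s is $\frac{1}{|B|}\sum_{\beta(b)=b}\frac{1}{|A|}|\{a:\alpha_b(a)\neq a\}|=\frac{1}{|B|}\sum_{\beta(b)=b}d_{\mathrm{Hamm}}(\alpha_b,\mathrm{id})$. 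Comparing with \eqref{metric in permut. wreath product} (with $d=d_{\mathrm{Hamm}}$ and noting $\alpha_{\beta(b)}=\alpha_b$ on the fixed set) shows these two quantities are equal, so $\psi$ is isometric. The only mildly delicate point, and the one I would be most careful about, is keeping the index-shifting consistent between the wreath-product convention in \eqref{metric in permut. wreath product} and the one used to define $\psi$ — i.e. making sure the $\alpha_{\beta(b)}$ appearing in $\psi$ matches the $x_{\tau(b)}$ in the formula for $\tilde d$ — but once the conventions are aligned this is a routine check.
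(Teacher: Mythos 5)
Your proof is correct and follows essentially the same route as the paper: the homomorphism check is the identical direct computation, and the isometry is the same fiberwise count of moved points split according to whether the $Sym(B)$-components agree at $b$. The only (harmless) deviation is that you first reduce to distances from the identity using bi-invariance of $\tilde d$ and $d_{\rm{Hamm}}$, whereas the paper computes $d_{\rm{Hamm}}(\psi(\alpha,\beta),\psi(\alpha',\beta'))$ for two general elements directly; both yield the same conclusion.
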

\begin{proof} 
For $(\alpha,\beta),(\alpha',\beta')\in Sym(A) \wr_B Sym (B)$ and $(a,b)\in A\times B$, 
the following identities show that $\psi$ is a homomorphism:
\begin{align*}
&\psi(\alpha,\beta)\psi(\alpha',\beta')(a,b)= \psi(\alpha,\beta)(\alpha'_{\beta'(b)}(a),\beta'(b))= (\alpha_{\beta(\beta'(b))}(\alpha'_{\beta'(b)}(a)),\beta\beta'(b));
\\
&\psi(\alpha (\beta\cdot\alpha'), \beta\beta')(a,b)= ((\alpha (\beta\cdot\alpha'))_{\beta\beta'(b)}(a),\beta\beta'(b))= (\alpha_{\beta\beta'(b)}\alpha'_{\beta'(b)}(a),\beta\beta'(b)).
\end{align*}
 $\psi$ is an isometry because
\begin{align*}
d&_{\rm{Hamm}}(\psi(\alpha,\beta),\psi(\alpha',\beta'))= \frac{1}{|A||B|}|\{(a,b)\in A\times B : (\alpha_{\beta(b)}(a),\beta(b)) \neq (\alpha'_{\beta'(b)}(a),\beta'(b))\}|\\
&=\frac{1}{|B|}\sum_{\{b\in B:\beta(b)\neq \beta'(b)\}}\frac{1}{|A|}|\{a\in A : (\alpha_{\beta(b)}(a),\beta(b)) \neq (\alpha'_{\beta'(b)}(a),\beta'(b))\}|\\
 & \ \ \ \ \ \ \ +\frac{1}{|B|}\sum_{\{b\in B:\beta(b)=\beta'(b)\}}\frac{1}{|A|}|\{a\in A : \alpha_{\beta(b)}(a) \neq \alpha'_{\beta(b)}(a)\}| \\
&= d_{{\rm Hamm}}(\beta,\beta') + \frac{1}{|B|} \sum_{\{b\in B:\beta(b)=\beta'(b)\}} d_{{\rm Hamm}}(\alpha_{\beta(b)},\alpha^{\prime}_{\beta(b)}) = \tilde d ((\alpha,\beta),(\alpha',\beta')).\qedhere
\end{align*}
\vspace{-0.3cm}\phantom\qedhere
\end{proof}

\begin{proof}[Proof of Theorem \ref{main teorema}(\ref{caso  sofic})] 
If $G$ is sofic, then $\prod_H G$ is sofic and  it  satisfies the hypothesis of  Proposition \ref{weak sofic permanence} 
 when $\rho(\varepsilon')=1-\varepsilon'$.
Hence, given $\varepsilon>0$ and a finite set $F\subseteq G\wrwr H$, 
 there exist a finite set $B\subseteq H$, a  finite permutational group $Sym(A)$ endowed with the normalized Hamming distance and a function 
$\Phi: G\wrwr H\to Sym(A)\wr_B Sym( B)$ that is $(F,\varepsilon,\tilde d)$-multiplicative and $(F,\varepsilon,\tilde d)$-free, 
for $\tilde\rho(\varepsilon)=1-\varepsilon/3\geq 1-\varepsilon$. Lemma \ref{isometria sofic} implies that 
$\psi\circ\Phi:G\wrwr H\to Sym(A\times B)$ is a $(F,\varepsilon,d_{\rm{Hamm}})$-sofic approximation of $G\wrwr H$.
\end{proof}
\subsection{The linear sofic case.}
Take a finite set  $B$. 
Consider $M_{|B|n}(K)$ and identify it with $M_{|B|}(M_n(K))$.
Hence, for $A\in M_{|B|n}(K)$ and $b',b\in B$,  $A_{(b',b)}\in M_n(K)$ denotes  the block entry of $A$ at the coordinates $(b',b)$.
Define 
\begin{align}\label{definicion de psi}
\psi: M_n(K) \wr_B Sym(B) &\to M_{|B|n}(K)\\
			\psi (U,\tau)_{(b',b)} &=\begin{cases}
										0 & \text { if } b' \neq \tau(b)\\
										U_{\tau(b)} & \text{ if } b'= \tau(b).
							\end{cases}\nonumber
\end{align}  
\begin{lemma}\label{Lemma desigualdad linear sofic}
$\psi$  
is a group homomorphism between  $GL_{n}(K) \wr_B Sym(B)$ and $GL_{|B|n}(K)$
and satisfies that 
\begin{equation}\label{ecuacion desigualdad linear sofic}
\frac{1}{2}\tilde d\left((U,\tau),(U',\tau')\right)\leq d_{{\rm rk}}\left(\psi(U,\tau),\psi(U',\tau')\right)\leq \tilde d\left((U,\tau),(U',\tau')\right).
\end{equation}
\end{lemma}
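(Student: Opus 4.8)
The plan is to verify the homomorphism property first and then establish the two metric inequalities in \eqref{ecuacion desigualdad linear sofic} by a direct block-matrix computation that parallels the argument for $\psi$ in Lemma \ref{isometria sofic}.

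\textbf{Homomorphism property.} First I would check that $\psi$ lands in $GL_{|B|n}(K)$: the matrix $\psi(U,\tau)$ is a block-permutation matrix whose $(\tau(b),b)$-block is the invertible matrix $U_{\tau(b)}$ and which is zero elsewhere, so it is a block-monomial matrix, hence invertible with determinant $\pm\prod_{b}\det(U_{\tau(b)})\neq 0$. Then, writing out $\psi(U,\tau)\psi(U',\tau')$ block-wise, the $(b'',b)$-block is $\sum_{b'} \psi(U,\tau)_{(b'',b')}\psi(U',\tau')_{(b',b)}$, and the only surviving term is $b'=\tau'(b)$, $b''=\tau(\tau'(b))=\tau\tau'(b)$, giving $U_{\tau\tau'(b)}U'_{\tau'(b)}$. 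On the other hand $\psi(U(\tau\cdot U'),\tau\tau')$ has $(\tau\tau'(b),b)$-block equal to $(U(\tau\cdot U'))_{\tau\tau'(b)} = U_{\tau\tau'(b)}\,(\tau\cdot U')_{\tau\tau'(b)} = U_{\tau\tau'(b)}U'_{\tau'(b)}$, using the convention that $(\tau\cdot U')_{c}=U'_{\tau^{-1}(c)}$. These agree, so $\psi$ is a homomorphism.

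\textbf{Metric inequalities.} For the rank estimate I would compute $\psi(U,\tau)-\psi(U',\tau')$ blockwise. Split $B$ according to whether $\tau(b)=\tau'(b)$ or not. For $b$ with $\tau(b)\neq\tau'(b)$, the $b$-th block column of the difference has two nonzero blocks, $U_{\tau(b)}$ in row $\tau(b)$ and $-U'_{\tau'(b)}$ in row $\tau'(b)$; these contribute rank at most $2n$ but at least $n$ (since each $U_{\tau(b)}$ alone has rank $n$, the block column is nonzero in a way that cannot be cancelled by other columns, as distinct $b$'s occupy disjoint block columns). For $b$ with $\tau(b)=\tau'(b)=:c$, the $b$-th block column of the difference has a single nonzero block $U_c - U'_c$ in row $c$, contributing exactly $\operatorname{rank}(U_c-U'_c)$. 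Because distinct values of $b$ occupy distinct block columns, ranks add across block columns, and one gets
\begin{equation*}
\operatorname{rank}\bigl(\psi(U,\tau)-\psi(U',\tau')\bigr) = \sum_{\tau(b)\neq\tau'(b)} r_b + \sum_{\tau(b)=\tau'(b)} \operatorname{rank}(U_{\tau(b)}-U'_{\tau(b)}),
\end{equation*}
where $n\le r_b\le 2n$. Dividing by $|B|n$ and comparing with
\begin{equation*}
\tilde d((U,\tau),(U',\tau')) = \frac{1}{|B|}\bigl|\{b:\tau(b)\neq\tau'(b)\}\bigr| + \frac{1}{|B|}\sum_{\tau(b)=\tau'(b)} d_{\rm rk}(U_{\tau(b)},U'_{\tau(b)}),
\end{equation*}
the bound $n\le r_b\le 2n$ on the ``moved'' columns and the equality on the ``fixed'' columns yield exactly \eqref{ecuacion desigualdad linear sofic}, with the factor $1/2$ coming from the worst case $r_b=n$ (lower bound) and the factor $1$ from $r_b\le 2n$ being dominated, after normalization, by the $d_{\rm Hamm}$ term only if that term is weighted by $1$ rather than $2$ — here I should be a little careful: the upper bound uses $r_b\le 2n$ so that $r_b/(|B|n)\le 2/|B|$, while the $\tilde d$ contribution of a moved $b$ is $1/|B|$; thus for the \emph{upper} bound one needs instead to observe that the rank of the block column of the difference is at most $n + \operatorname{rank}$ of nothing $=$ at most...

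\textbf{Expected main obstacle.} The delicate point is precisely pinning down the per-block-column rank bounds so that the constants $1/2$ and $1$ come out correctly. The cleanest route is: for a moved $b$, the block column difference has rank exactly $n$ when one of $U_{\tau(b)}, U'_{\tau'(b)}$ is zero — but they are invertible, so its rank is actually between $n$ and $2n$; however, what matters is that it contributes \emph{at least} $n$ (giving the lower bound $\tfrac12 \tilde d \le d_{\rm rk}$ since $\tfrac12\cdot\tfrac1{|B|}\cdot(\text{Hamm count}) \le \tfrac1{|B|n}\sum n$) and, for the \emph{upper} bound, one instead estimates $d_{\rm rk}(\psi(U,\tau),\psi(U',\tau'))$ by writing $\psi(U,\tau)-\psi(U',\tau')$ as a sum of a block-diagonal-type correction on fixed coordinates plus a matrix supported on the moved block rows/columns, the latter having rank at most $n\cdot|\{b:\tau(b)\neq\tau'(b)\}|$. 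I expect this bookkeeping — choosing the right decomposition of the difference so that subadditivity of rank gives $\le \tilde d$ and disjointness of supports gives $\ge \tfrac12\tilde d$ — to be the only real work; the homomorphism check is routine. I would carry out the inequalities by fixing this decomposition explicitly and then reading off both bounds from it.
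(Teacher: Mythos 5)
Your homomorphism check is fine and agrees with the paper's (routine) verification, but the metric inequalities contain a genuine gap. You assert that ``ranks add across block columns'' and that each moved block column (one with $\tau(b)\neq\tau'(b)$) contributes some $r_b$ with $n\le r_b\le 2n$ to the total rank. Rank is not additive over columns, and the per-column lower bound is false: take $n=1$, $B=\{1,2\}$, $\tau=\mathrm{id}$, $\tau'=(1\,2)$, and all $U_b=U'_b=1$; then $\psi(U,\tau)-\psi(U',\tau')=\left(\begin{smallmatrix}1&-1\\-1&1\end{smallmatrix}\right)$ has rank $1$, whereas your formula would force $r_1+r_2=1$ with each $r_b\ge n=1$. (Note this example saturates the lower bound of the lemma: $\tilde d=1$ and $d_{\mathrm{rk}}=1/2$, which is exactly why the constant is $1/2$ and not $1$.) The justification you offer --- that a nonzero block column ``cannot be cancelled by other columns'' because distinct $b$'s occupy distinct block columns --- is precisely what fails: different moved block columns can have overlapping row supports, and their column spaces can coincide.

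The missing ingredient is the decomposition the paper uses. The difference $\psi(U,\tau)-\psi(U',\tau')$ splits into two diagonal blocks on disjoint sets of rows \emph{and} columns: $A_1$, supported on the columns $b$ with $\tau(b)=\tau'(b)$ and the corresponding rows (so $\mathrm{rank}(A_1)=\sum_{\tau(b)=\tau'(b)}\mathrm{rank}(U_{\tau(b)}-U'_{\tau(b)})$, and rank genuinely adds between $A_1$ and its complement because the supports are disjoint in both directions), and $A_2$, which as an $r\times r$ block matrix with $r=|\{b:\tau(b)\neq\tau'(b)\}|$ has exactly two nonzero invertible $n\times n$ entries in each block row and each block column. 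The key combinatorial fact, absent from your argument, is that any such matrix has rank at least $rn/2$ (and trivially at most $rn$). Your sketch of the upper bound --- the moved part has rank at most $n\cdot r$ by counting block columns --- is correct and recoverable, but the lower bound $\frac12\tilde d\le d_{\mathrm{rk}}$ is never established; ``disjointness of supports'' does not deliver it, and the argument visibly trails off at exactly the bookkeeping you identify as the main obstacle.
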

\begin{proof}
It is obvious that $\psi $ maps  $GL_n(K) \wr_B Sym(B)$ to $GL_{|B|n}(K)$. 
A routine matrix computation shows that it is a group homomorphism.
In order to bound $d_{{\rm rk}}\left(\psi(U,\tau),\psi(U',\tau')\right)$,
first observe that the matrix $\psi(U,\tau)-\psi(U',\tau')\in M_{|B|}(M_n(K))$
has at most two nonzero block-entries in each column and in each row. Moreover, the only
columns with one nonzero block-entry are the columns $b\in B$ for which $\tau(b)=\tau'(b)$ and the only rows with one nonzero block-entry are 
the rows $\tilde b\in B$ such that $\tau^{-1}(\tilde b)=\tau'^{-1}(\tilde b)$. With this in mind, 
let $A_1$ be  the submatrix of  $\psi(U,\tau)-\psi(U',\tau')$ obtained after removing the rows and columns with 
exactly two nonzero block-entries and let $A_2$ be the submatrix of $\psi(U,\tau)-\psi(U',\tau')$ obtained after
removing the rows and columns with exactly one nonzero block-entry.
 Then 
\begin{equation}\label{ec1}
d_{{\rm rk}}\left(\psi(U,\tau),\psi(U',\tau')\right)
=\frac{1}{|B|n}{\rm rank} (\psi(U,\tau)-\psi(U',\tau'))=\frac{1}{|B|n}{\rm rank} (A_1)+\frac{1}{|B|n}{\rm rank}(A_2),
\end{equation}
and 
\begin{equation}\label{ec2}
\frac{1}{|B|n}{\rm rank}(A_1)= 
\frac{1}{|B|}\sum_{\substack{b\in B\\ \tau(b)=\tau'(b)}} \frac{1}{n}{\rm rank} \left(U_{\tau(b)} -U'_{\tau(b)}\right)=
\frac{1}{|B|}\sum_{\substack{b\in B\\ \tau(b)=\tau'(b)}}d_{{\rm rk}} \left(U_{\tau(b)},U'_{\tau(b)}\right).
\end{equation}
If we regard
$A_2$ as a block matrix
in $M_{|\{b \in B: \tau(b)\neq\tau'(b)\}|}(M_n(K))$ then all its nonzero entries 
are in $GL_n(K)$; each column $b$ has exactly two nonzero entries, 
the ones corresponding to the rows $\tau(b)$ and $\tau'(b)$; and each  row $\tilde b$ has exactly two nonzero  entries,
the ones corresponding to the columns  $\tau^{-1}(\tilde b)$ and $\tau'^{-1}(\tilde b)$.
 Then  
 \begin{equation}\label{ec3}
 \frac{1}{2}n\big |\{b\in B:  \tau(b)\neq \tau'(b)\}\big|\leq {\rm rank}( A_2)\leq n\big|\{b\in B:  \tau(b)\neq \tau'(b)\}\big |,
 \end{equation}
 where the first inequality follows from the simple fact that if a matrix $A\in M_{r}(K)$ 
 has exactly two nonzero entries in each column and in each row, then ${\rm rank}(A)\geq r/2$.
  Replacing \eqref{ec2} and \eqref{ec3}
 in \eqref{ec1} yield the desired result.
 \end{proof}

\begin{proof} [Proof of Theorem \ref{main teorema}(\ref{caso linear sofic})]

If $G$ is linear sofic, then $\prod_H G$ is linear sofic, and  it  satisfies the hypothesis of  Proposition \ref{weak sofic permanence} 
 when $\rho(\varepsilon')=1/4-\varepsilon'$. The proof proceeds as in the sofic case.
\end{proof}
\subsection{The hyperlinear case.}\label{subsection hiperlinear}
Identify  $\mathcal B(\CH)$ with $M_n(K)$  (with $K=\mathbb R$  or $K=\mathbb C$) via the matrix representation in an  orthonormal basis $\{v_1,\ldots,v_n\}$. 
It is clear that the function  $\psi$ defined in \eqref{definicion de psi} can be regarded as 
$\psi:  \mathcal B(\CH) \wr_B Sym(B) \to  \mathcal B(\bigoplus_{B}\CH)$ and that 
$\|\psi(U,\tau)\|_2^2=\frac{1}{|B|}\sum_{b\in B}\|U_b\|_2^2$.
Recall that the diameter of the unitary group in the Hilbert-Schmidt metric is ${2}$.
Then the appropriate metric in 
 $\mathcal U(\CH) \wr_B Sym(B)$ is the one obtained by scaling
the second summand in  \eqref{metric in permut. wreath product} by $1/2$. We still call this metric $\tilde d$.

\begin{lemma}\label{desigualdad de hiperlineal}
$\psi$  
is a group homomorphism between  $\mathcal U(\mathcal H) \wr_B Sym(B)$ and $\mathcal U(\bigoplus_B\mathcal H)$
and satisfies that 
\begin{equation}\label{desigualdad hiperlin}
\tilde d\left((U,\tau),(U',\tau')\right)\leq d_{{\rm HS}}\left(\psi(U,\tau),\psi(U',\tau')\right)\leq 2\sqrt{\tilde d\left((U,\tau),(U',\tau')\right)}.
\end{equation}
\end{lemma}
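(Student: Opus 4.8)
\textbf{Proof plan for Lemma \ref{desigualdad de hiperlineal}.}

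The plan is to mimic the structure of the linear-sofic case (Lemma \ref{Lemma desigualdad linear sofic}), adjusting for the fact that the relevant quantity is now a Hilbert--Schmidt norm rather than a rank, and that the metric $\tilde d$ on $\mathcal U(\mathcal H)\wr_B Sym(B)$ carries the extra $1/2$-scaling on its second summand. First I would check that $\psi$ maps $\mathcal U(\mathcal H)\wr_B Sym(B)$ into $\mathcal U(\bigoplus_B \mathcal H)$: the matrix $\psi(U,\tau)$ is a block-permutation matrix whose nonzero blocks are the unitaries $U_{\tau(b)}$, so it is itself unitary, and the homomorphism property is the same routine block computation already invoked in Lemma \ref{Lemma desigualdad linear sofic} (indeed $\psi$ here is literally the restriction of the map in \eqref{definicion de psi} to unitary entries). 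So the only real content is the two-sided estimate \eqref{desigualdad hiperlin}.

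For that, the key computational input is the identity $\|\psi(U,\tau)\|_2^2=\frac{1}{|B|}\sum_{b\in B}\|U_b\|_2^2$ already recorded before the statement, combined with the observation that $\psi(U,\tau)-\psi(U',\tau')$ splits, after regarding it as a block matrix in $M_{|B|}(M_n(K))$, into the ``matched'' columns $b$ with $\tau(b)=\tau'(b)$ — contributing $U_{\tau(b)}-U'_{\tau(b)}$ in block row $\tau(b)$ — and the ``unmatched'' columns $b$ with $\tau(b)\neq\tau'(b)$ — each contributing two unit-HS-norm blocks (namely $U_{\tau(b)}$ in row $\tau(b)$ and $-U'_{\tau'(b)}$ in row $\tau'(b)$), with no overlap between the matched and unmatched parts. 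Since the squared Hilbert--Schmidt norm is additive over a partition of the index set into disjoint blocks of rows/columns, this gives
\begin{align*}
d_{\rm HS}(\psi(U,\tau),\psi(U',\tau'))^2
&=\frac{1}{|B|}\sum_{\substack{b\in B\\ \tau(b)=\tau'(b)}}\|U_{\tau(b)}-U'_{\tau(b)}\|_2^2
+\frac{1}{|B|}\sum_{\substack{b\in B\\ \tau(b)\neq\tau'(b)}}\big(\|U_{\tau(b)}\|_2^2+\|U'_{\tau'(b)}\|_2^2\big)\\
&=\frac{1}{|B|}\sum_{\substack{b\in B\\ \tau(b)=\tau'(b)}}d_{\rm HS}(U_{\tau(b)},U'_{\tau(b)})^2
+2\,d_{\rm Hamm}(\tau,\tau').
\end{align*}
Here I used $\|U_{\tau(b)}\|_2^2=1$ for unitaries with the chosen normalization, and $\frac{1}{|B|}|\{b:\tau(b)\neq\tau'(b)\}|=d_{\rm Hamm}(\tau,\tau')$. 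Now set $p:=d_{\rm Hamm}(\tau,\tau')$ and $q:=\frac{1}{2}\cdot\frac{1}{|B|}\sum_{\tau(b)=\tau'(b)}d_{\rm HS}(U_{\tau(b)},U'_{\tau(b)})^2$, so that the displayed quantity equals $2p+2q$, while by definition $\tilde d((U,\tau),(U',\tau'))=p+\sqrt{q'}$ where $q'=\frac{1}{2|B|}\sum_{\tau(b)=\tau'(b)}d_{\rm HS}(U_{\tau(b)},U'_{\tau(b)})$ — wait, the second summand of $\tilde d$ is a sum of distances, not of squared distances, so some care is needed here.

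The one genuine subtlety — and the step I expect to be the main obstacle — is precisely reconciling the sum-of-distances appearing in $\tilde d$ with the sum-of-squared-distances that comes out of $d_{\rm HS}^2$. The clean way to handle this: write $\tilde d=p+s$ where $s:=\frac{1}{2|B|}\sum_{\tau(b)=\tau'(b)}d_{\rm HS}(U_{\tau(b)},U'_{\tau(b)})$, and let $t:=\frac{1}{2|B|}\sum_{\tau(b)=\tau'(b)}d_{\rm HS}(U_{\tau(b)},U'_{\tau(b)})^2$, so that $d_{\rm HS}(\psi(U,\tau),\psi(U',\tau'))=\sqrt{2p+2t}$. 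Since each $d_{\rm HS}(U_{\tau(b)},U'_{\tau(b)})\le 2$ (diameter of the unitary group), we get $t\le 2s$; and by convexity (Cauchy--Schwarz over the normalized counting measure on $B$, using also that the summand index set has size $\le |B|$) one gets $s^2\le t\le 2s$. Combining: for the upper bound, $d_{\rm HS}^2=2p+2t\le 2p+4s\le 4(p+s)=4\tilde d$ since $p,s\ge 0$ and $p+s\le 1$ (the diameter is $\le 1$ after the scaling, so $p+s\le\max$ is bounded, in fact one should just use $2p+4s\le 4(p+s)$ directly), hence $d_{\rm HS}\le 2\sqrt{\tilde d}$. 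For the lower bound, $d_{\rm HS}^2=2p+2t\ge 2p+2s^2\ge p^2+s^2+2ps=(p+s)^2=\tilde d^{\,2}$, where I used $2p\ge p^2$ (since $p\le 1$), $2s^2\ge$ nothing directly — rather one uses $2p+2t\ge p+t\ge p+s^2$ and then $p+t\ge(p+s)^2$? This needs $t\ge s^2+2ps+ (p^2-p)$; since $t\ge s^2$ and $p\ge p^2$, indeed $p+t\ge p^2+s^2$, and to reach $(p+s)^2=p^2+2ps+s^2$ one notes $2ps\le p+ s^2$ is false in general — so instead I would bound $d_{\rm HS}^2\ge 2p+2t\ge p+t\ge p+s^2\ge(p+s)^2$ only when $p+s\le 1$, using $p+s^2-(p+s)^2=p-p^2-2ps-s^2+s^2 = p(1-p-2s)$, hmm. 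The safest route, which I will adopt in the writeup, is: since $p\le 1$ we have $p\ge p^2$, and write $d_{\rm HS}^2=2p+2t\ge p^2+ 2t$; then since $t\ge s^2$ and, whenever $p+s\le 1$, $2t\ge 2s^2\ge 2ps\cdot(s/p)$... This last bit is fiddly, so in the final version I would simply invoke the elementary inequality that for $a,b\ge 0$ with $a+b\le 1$ one has $(a+b)^2\le 2a+2b^2\le 2a+2b$ — a one-line check — applied with $a=p$, $b=s$ together with $t\ge s^2$ to get $d_{\rm HS}^2\ge 2p+2s^2\ge(p+s)^2=\tilde d^{\,2}$, and for the other side $d_{\rm HS}^2=2p+2t\le 2p+4s\le 4(p+s)=4\tilde d$ using $t\le 2s$. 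This gives exactly \eqref{desigualdad hiperlin}.
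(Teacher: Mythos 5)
Your proposal is correct and follows essentially the same route as the paper: the same column-by-column decomposition of $\|\psi(U,\tau)-\psi(U',\tau')\|_2^2$ into matched and unmatched indices yielding $2d_{\rm Hamm}(\tau,\tau')+\frac{1}{|B|}\sum_{\tau(b)=\tau'(b)}d_{\rm HS}(U_{\tau(b)},U'_{\tau(b)})^2$, the bound $d_{\rm HS}\le 2$ for the upper estimate, and Cauchy--Schwarz plus an elementary inequality exploiting $\tilde d\le 1$ for the lower estimate. The only (immaterial) difference is the final elementary step, where the paper uses $2a+4b^2\ge 2a^2+2b^2\ge(a+b)^2$ while you use $(a+b)^2\le 2a+2b^2$ under $a+b\le 1$.
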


\begin{proof}It is obvious that $\psi $ maps  $\mathcal U(\mathcal H) \wr_B Sym(B)$ to $\mathcal U(\bigoplus_B\mathcal H)$.
 Moreover, on the one hand
\begin{align*}
\|\psi(U,\tau)-\psi(U',\tau')\|_2^2
&= \frac{1}{|B|} \sum_{\substack{b\in B\\ \tau(b)\neq\tau'(b)}}
\Big\|
U_{\tau(b)}\Big\|_2^2
+\left\|U'_{\tau'(b)}
\right\|_2^2\\
&\,\,\,\,\, \ \ \ + 
\frac{1}{|B|} \sum_{\substack{b\in B\\ \tau(b)=\tau'(b)}}
 \left\|U_{\tau(b)}-U'_{\tau(b)}\right\|_2^2 \\
&=2 d_{\rm{Hamm}}(\tau,\tau') + \frac{1}{|B|}\sum_{\substack{b\in B\\ \tau(b)=\tau'(b)}}d_{{\rm HS}}\left(U_{\tau(b)},U'_{\tau(b)}\right)^2 \\
&\leq4\tilde d(\psi(U,\tau),\psi(U',\tau')),
\end{align*}
on the other hand, by the Cauchy-Schwarz inequality, we have that 
\begin{align*}
&2 d_{\rm{Hamm}}(\tau,\tau') + \frac{1}{|B|}\sum_{\substack{b\in B\\ \tau(b)=\tau'(b)}}d_{{\rm HS}}\left(U_{\tau(b)},U'_{\tau(b)}\right)^2
\geq  2 d_{\rm{Hamm}}(\tau,\tau') + \left(\frac{1}{|B|}\sum_{\substack{b\in B\\ \tau(b)=\tau'(b)}}d_{{\rm HS}}\left(U_{\tau(b)},U'_{\tau(b)}\right)\right)^2\\
&\geq 2d_{\rm{Hamm}}(\tau,\tau')^2 + 2\left(\frac{1}{2|B|}\sum_{\substack{b\in B\\ \tau(b)=\tau'(b)}}d_{{\rm HS}}\left(U_{\tau(b)},U'_{\tau(b)}\right)\right)^2
\geq \tilde d(\psi(U,\tau),\psi(U',\tau'))^2.\qedhere
\end{align*}
\vspace{-0.3cm}\phantom\qedhere
\end{proof}

\begin{remark}\, \label{about radulescu theorem}
According to the original definition  given by R\u{a}dulescu in 2000, a  group $G$ is {\it hyperlinear} if it embeds in $\mathcal U(R^{\omega})$, the unitary group of the ultrapower of the
hyperfinite $\rm{II}_1$ factor $R$. R\u{a}dulescu showed in  \cite{MR2436761} (see also \cite[Proposition 7.1]{MR2072092} for a simpler proof) that this is the same as saying that the group von Neumann algebra $L(G)$  satisfies the Connes' embedding problem, namely that  $L(G)$ embeds in $R^{\omega}$. This, in turn, implies that the embedding of $G$  in $\mathcal U(R^{\omega})$ can be taken to be an orthogonal embedding, (as a mater of fact, the proof of
 R\u{a}dulescu's theorem boils down to  show exactly this). Since unitaries in $R$ can be approximated with $n\times n$ unitary matrices with $n$
sufficiently large,  it follows that $G$ is hyperlinear if and only if $G$ embeds orthogonally in an ultraproduct of unitary matrices endowed with the Hilbert-Schmidt norm. When $G$ is countable, translating from the language of ultraproducts to a finitary version yields the definition of hyperlinear group we gave in Example \ref{ejemplos}\eqref{hiperlin}, with the notion of $(F,\varepsilon,d_{{\rm HS}})$-trace preserving defined in \eqref{trace preserving}.
By means of the Cauchy-Schwartz inequality, it is clear that $(F,\varepsilon,d_{{\rm HS}})$-trace preserving is equivalent to
\begin{equation}
\label{alternative 4}
 \sqrt 2-\varepsilon< \|\phi(g)-1\|_{2}<\sqrt 2+\varepsilon \,\,\text{ for
all }g\in F\setminus \{1\}.
\end{equation}
These two inequalities combined just reflect that $\phi(g)$ can be taken to be ``almost orthogonal'' to $1$ in the Hilbert-Schmidt norm. After removing the second inequality in \eqref{alternative 4}, one gets that $\phi$ is $(F,\varepsilon,d_{{\rm HS}})$-free for the function $\rho(\varepsilon):=\sqrt 2-\varepsilon$ and this is what some articles write as part of the definition of hyperlinear group.  Both definitions are equivalent due to R\u{a}dulescu's theorem, 
 however,  a hyperlinear approximation that is $(F,\varepsilon,d_{{\rm HS}})$-trace preserving carries 
more information than one that is only $(F,\varepsilon,d_{{\rm HS}})$-free for the function $\rho(\varepsilon)=\sqrt 2-\varepsilon$.
\end{remark}
 A consequence of the previous remark is that one could use the characterization of hyperlinear groups in terms of  $(F,\varepsilon,d_{{\rm HS}})$-freeness to deduce Theorem \ref{main teorema}(\ref{caso hyperlineal}) from Proposition \ref{weak sofic permanence} and Lemma \ref{desigualdad de hiperlineal} following the same strategy employed in the sofic and linear sofic cases. We leave this approach as an exercise.
 The disadvantage with this approach is that if one starts with a hyperlinear approximation of $G$ that is $(F,\varepsilon,d_{{\rm HS}})$-trace preserving, the proof would only  guarantee a hyperlinear approximation of $G\wrwr H$ that is $(F,\varepsilon,d_{{\rm HS}})$-free for the function $\rho(\varepsilon)=\sqrt 2-\varepsilon$.
However, minor adjustments to the proof of Proposition \ref{weak sofic permanence}
are enough to provide a hyperlinear approximation of $G\wrwr H$ that is $(F,\varepsilon,d_{{\rm HS}})$-trace preserving.
We sketch them in the next proof. 
\begin{proof}[Proof of Theorem \ref{main teorema}(\ref{caso hyperlineal})]\label{dem caso hl}
Consider $F_0$, $F_H$, $B$, $\sigma$ and $F_{\,\prod_{H}G}$ 
as in the proof of Proposition \ref{weak sofic permanence}, 
but in this case we take $\varepsilon^2/24$ in \eqref{folner} 
so that $\sigma$ becomes an $(F_H,\varepsilon^2/12)$-sofic approximation of $H$.
Since  $\prod_H G$ is hyperlinear, 
there exist a finite dimensional Hilbert space $\mathcal H$ and a function 
$\varphi: \prod_H G\to \mathcal U(\mathcal H)$ with $\varphi(1)=1$ that is 
$(F_{\,\prod_{H}G},\varepsilon^2/12, d_{\rm{HS}})$-multiplicative 
and $(F_{\,\prod_{H}G},\varepsilon^2/12,d_{\rm{HS}})$-trace preserving.

Keeping in mind that the metric in $\mathcal U(\mathcal H)\wr_B Sym( B)$ is 
obtained by multiplying the second summand in  equation \eqref{metric in permut. wreath product} by $1/2$,
the same proof of Proposition \ref{weak sofic permanence}
shows that the function 
$\Phi: G\wrwr H\to \mathcal U(\mathcal H)\wr_B Sym( B)$ 
 is $(F_0,\varepsilon^2/4,\tilde d)$-multiplicative. Then, the second inequality in
 \eqref{desigualdad hiperlin}
  implies that 
$\psi\circ\Phi:G\wrwr H\to \mathcal U(\bigoplus_B\mathcal H)$ is $(F_0,\varepsilon,d_{\rm{HS}})$-multiplicative.
It remains to show that $\psi\circ\Phi$ is $(F_0,\varepsilon,d_{\rm{HS}})$-trace preserving.
The basic observation that the trace of a block-matrix is equal to the sum of the traces of its block-diagonal entries, yields that
$tr (\psi((U_{b})_{b\in B},\tau))=\frac{1}{|B|}\sum_{b\in B:\tau(b)=b} tr(U_{b}).$
Hence 
$$\left|tr (\psi\circ\Phi(x,h))\right | 
= 
\frac{1}{|B|}\left |\sum_{b\in B:\sigma(h)b=b}tr(\varphi \theta_{b^{-1}}(x))\right | 
\leq \frac{1}{|B|}\left |\{b\in B:\sigma(h)b=b\}\right |=1-d_{{\rm Hamm}}(\sigma(h),1).$$
It follows that $\left | tr (\psi\circ\Phi(x,h))\right | <\varepsilon^2/12<\varepsilon$, whenever $ h\in F_H\setminus\{1\}$.
On the other hand, if $(x,1)\in F_0\setminus\{1\}$ then $\theta_b^{-1}(x)\in F_{\,\prod_{H}G}\setminus\{1\}$. Thus
$\left| tr (\psi\circ\Phi(x,1))\right |=\frac{1}{|B|}\left |\sum_{b\in B}tr(\varphi \theta_{b^{-1}}(x))\right | < \varepsilon^2/12<\varepsilon.$
\end{proof}
\section{The case of co-amenable subgroups}\label{S5}
The proof of Elek and Szab\'o in \cite{ELE03} showing that extensions of the form sofic-by-amenable are sofic 
can  be adapted easily to show that if a group $G$ has a sofic and co-amenable subgroup $H$, then $G$ is sofic. 
Moreover, there is a seemingly less well-known theorem of Kaloujnine and Krasner  \cite{MR49892}, stating that if $H<G$, 
then  $G$ embeds  in $H\wrwr \left(G/\text {core}(H)\right)$, (see also  \cite[Theorem 2.3.1]{MR2674854}).
Let us emphasize that $H$ does not need to be normal in none of these statements. 
Unfortunately, we do not know how to use this theorem of Kaloujnine and Krasner to show the
theorem of Elek and Szab\'o for co-amenable groups. 
 For instance it is false that if $H$ is co-amenable on $G$, 
 then $\text {core}(H)$ is co-amenable in $G$, (which is the same as saying that the group $G/ \text {core}(H)$ is amenable) so it seems hard to have some control
 on $H\wrwr\left(G/\text {core}(H)\right)$.
 As an example, Monod and Popa showed in \cite{MR1999183} that for any countable group $Q$, the group $H:=\oplus_{n\geq 0} Q$ is 
 co-amenable in $G:=Q\wr \mathbb Z$, the restricted wreath product of $Q$ by $\mathbb Z$. 
 Since ${\rm core}(H)=\{1\}$, taking any non-amenable $Q$, it follows that $G/{\rm core}(H)$ is non-amenable.
 Moreover, in this example, the theorem of Kaloujnine and Krasner is the trivial 
 statement $G<H\wrwr G$, and it is unknown in general whether $H\wrwr G$ is sofic when $Q$ (and hence $H$ and $G$) are sofic. 
 
In spite of this obstruction, we are still able to use the work we have done in the previous sections to show in a unified way that if $H$ is co-amenable in $G$, and $H$ belongs to one of the four classes $\mathcal C$ in the Examples \ref{ejemplos}, then $G$ is in the same class $\mathcal C$ as $H$. 
We achieve this by  replacing each time we used Proposition \ref{weak sofic permanence} in  Section \ref{S4} with the next statement.
\begin{proposition}
Let $G$ be  group and
let $H$ be a  subgroup with the property that for every $\varepsilon'>0$ and for every finite subset $F_{H}$,  
there exist a group  $(K,d)\in \mathcal C$ of diameter bounded by $1$ and a function 
$\varphi:H\to K$ with $\varphi(1)=1$ that is $(F_{H},\varepsilon', d)$-multiplicative 
and $(F_{H},\varepsilon', d)$-free, for certain function $\rho$, independent of $K$.
Moreover, suppose that $H$ is co-amenable in $G$, namely suppose that the left regular action of $G$ on the left cosets $G/H$ is amenable.

Then, given $\varepsilon>0$ and a finite set $F\subseteq G$, there exist a finite set $ B\subseteq G$, 
a group $(K,d)\in \mathcal C$ and a function 
$\Phi: G\to K\wr_B Sym( B)$ that is $(F,\varepsilon,\tilde d)$-multiplicative 
and $(F,\varepsilon, \tilde d)$-free,
for the function $\tilde\rho(\varepsilon):=(1-\varepsilon/3) \rho(\varepsilon/3)$.
\end{proposition}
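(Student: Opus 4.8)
The plan is to mimic the proof of Proposition \ref{weak sofic permanence}, replacing the F\o lner set inside the amenable group $H$ with a F\o lner set for the amenable action of $G$ on $G/H$. First I would fix $\varepsilon>0$ and a finite set $F\subseteq G$, set $F_0:=F\cup\{1\}\cup F^{-1}$, and use amenability of the left regular action of $G$ on $G/H$ to produce a finite set $\bar B\subseteq G/H$ with $|g\bar B\bigtriangleup\bar B|/|\bar B|\leq \varepsilon/6$ for all $g$ in the finite set $F_0^2$ (product set). Lifting this along a section of $G\to G/H$ gives a finite set $B\subseteq G$; as in \eqref{def of sigma} one defines a map $\sigma\colon G\to Sym(B)$ by $\sigma(g)b:=gb$ when $gb$ stays in $B$ (meaning the coset $gbH$ lands in $\bar B$, using the representatives) and by a fixed bijection $\gamma_g$ correcting the defect otherwise. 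The same elementary estimate shows $\sigma$ is $(F_0,\varepsilon/3,d_{\mathrm{Hamm}})$-multiplicative and $(F_0,\varepsilon/3,d_{\mathrm{Hamm}})$-free for $\rho'(\varepsilon/3)=1-\varepsilon/3$.

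Next I would build the "coordinate" maps. For each $b\in B$, write $b\in G$ as a coset representative; the key point (this is where co-amenability, not amenability of a quotient, intervenes) is that there is no shift action $\theta$ of $G$ on a fixed group $E$ available, so instead of $\theta_{b^{-1}}(x)$ one must use the following substitute: for $x\in G$ and $b\in B$, the element $b^{-1}xb'$ where $b'$ is the representative of the coset $x^{-1}bH$ — equivalently the unique $b'$ with $\sigma(x^{-1})$-type bookkeeping — lies in $H$ whenever the relevant cosets stay inside $\bar B$. Concretely, I would apply the hypothesis on $H$ to the finite set
\[
F_H:=\bigcup_{\substack{x\in F_0\\ b,b'\in B}} \{\, b^{-1}xb' : b^{-1}xb'\in H \,\}\subseteq H,
\]
obtaining $(K,d)\in\mathcal C$ with $\mathrm{diam}(K)\leq1$ and $\varphi\colon H\to K$, $\varphi(1)=1$, that is $(F_H,\varepsilon/3,d)$-multiplicative and $(F_H,\varepsilon/3,d)$-free for $\rho$. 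Then define $\Phi\colon G\to K\wr_B Sym(B)$ by $\Phi(x):=\big((\varphi(\beta_b(x)))_{b\in B},\sigma(x)\big)$, where $\beta_b(x):=b^{-1}x\,\sigma(x)^{-1}(b)\in H$ is the "cocycle-type" coordinate forced by the wreath-product multiplication (set to $1$, harmlessly, on the F\o lner-boundary indices where this fails to land in $H$).

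For multiplicativity, I would expand $\Phi(xx')$ and $\Phi(x)\Phi(x')$ exactly as in \eqref{Suma Grande}: the $Sym(B)$-parts differ by $<\varepsilon/3$ since $\sigma$ is approximately multiplicative, and the sum over $b\in B$ with matching permutation values splits into (i) indices $b$ for which all three relevant cosets stay inside $\bar B$ — here $\beta_b(xx')=\beta_b(x)\,\beta_{\sigma(x')^{-1}b}(x')$ is a genuine identity in $H$, all three arguments lie in $F_H$, and $(F_H,\varepsilon/3,d)$-multiplicativity of $\varphi$ bounds each summand by $\varepsilon/3$; and (ii) a boundary set of relative size $<\varepsilon/6$ by \eqref{folner}, controlled because $\mathrm{diam}(K)\leq1$ — giving total error $<5\varepsilon/6<\varepsilon$. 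For freeness: if $\sigma(x)\neq 1$ then $\tilde d(\Phi(x),1)\geq d_{\mathrm{Hamm}}(\sigma(x),1)\geq 1-\varepsilon/3$; if $\sigma(x)=1$ but $x\neq1$, then $x\notin H$ would already force $\sigma(x)\neq1$ for $B$ large (since $x$ moves some coset), so $x\in H\setminus\{1\}$, and then for the indices $b$ with $b^{-1}xb\in H$ one gets $d(\varphi(b^{-1}xb),1)\geq\rho(\varepsilon/3)$ — but here a subtlety arises: the coordinate $b^{-1}xb$ need not be $x$ itself, and it could equal $1$ in $H$ even when $x\neq1$, for only one or few $b$. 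This forces the weaker bound $\tilde\rho(\varepsilon)=(1-\varepsilon/3)\rho(\varepsilon/3)$ rather than $\min(1-\varepsilon/3,\rho(\varepsilon/3))$: one combines the fraction of "good" indices (at least $1-\varepsilon/3$, again by \eqref{folner}) with the per-index lower bound $\rho(\varepsilon/3)$.

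The main obstacle I anticipate is precisely this freeness step and, underlying it, the bookkeeping of which coset representatives $\beta_b(x)$ actually land in $H$: unlike in Proposition \ref{weak sofic permanence}, where $\theta_{b^{-1}}(x)=1$ iff $x=1$, here the coordinate maps conjugate $x$ by varying elements and can genuinely kill $x$ for isolated $b$ while detecting it for a large proportion. Making the "large proportion" quantitative — showing at least a $(1-\varepsilon/3)$-fraction of $b\in B$ have $b^{-1}xbH$ inside $\bar B$ so that $\varphi$ sees a nontrivial element — is exactly where the F\o lner condition \eqref{folner} for the $G$-action on $G/H$ is used a second time, and it is what produces the product $(1-\varepsilon/3)\rho(\varepsilon/3)$ in $\tilde\rho$. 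Everything else is a routine transcription of the permutational-wreath-product computation already carried out, and the downstream application (feeding $\Phi$ through Lemmas \ref{isometria sofic}, \ref{Lemma desigualdad linear sofic}, \ref{desigualdad de hiperlineal}) goes through verbatim.
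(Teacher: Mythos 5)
Your overall strategy (F\o lner set $\overline B\subseteq G/H$ for the action, a section $\tau$ with $B=\tau(\overline B)$, the quasi-action $\sigma$, the conjugated coordinates $b^{-1}g\,\tau\pi(g^{-1}b)\in H$, and the same good/bad partition for multiplicativity) is exactly the paper's, and the multiplicativity half would go through. The gap is in the freeness step. Your opening dichotomy --- ``if $\sigma(x)\neq 1$ then $d_{\mathrm{Hamm}}(\sigma(x),1)\geq 1-\varepsilon/3$'' --- is false here: unlike in Proposition \ref{weak sofic permanence}, where $H$ acts on itself freely so a nontrivial $h$ moves \emph{every} $b$ with $hb\in B$, the action of $G$ on $G/H$ has stabilizers ($f.\bar b=\bar b$ exactly when $\tau(\bar b)^{-1}f\tau(\bar b)\in H$), so $\sigma(f)$ can move an arbitrary proportion of $\overline B$, and neither the Hamming term alone nor the coordinate sum alone need be large. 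This is precisely why the conclusion is the product $(1-\varepsilon/3)\rho(\varepsilon/3)$ rather than a minimum, and why the argument cannot be split into the two clean cases you propose.

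The correct argument, which your sketch gestures at but does not supply, is a single weighted estimate: set $B_f:=\{b\in B:\sigma(f)\pi(b)=f.\pi(b)\}$, of size at least $(1-\varepsilon/8)|B|$ by the F\o lner condition. Each $b\in B_f$ that is moved by $\tau\sigma(f)\pi$ contributes $1/|B|$ to the Hamming term; each $b\in B_f$ that is fixed satisfies $\tau\pi(f^{-1}b)=b$, so its coordinate is exactly $\varphi(b^{-1}fb)$ with $b^{-1}fb\in F_H\setminus\{1\}$, contributing at least $\rho(\varepsilon/3)/|B|$. Since $\rho\leq 1$, summing gives $\tilde d(\Phi(f),1)\geq \rho(\varepsilon/3)\,|B_f|/|B|\geq(1-\varepsilon/8)\rho(\varepsilon/3)$. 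Note also that the ``subtlety'' you flag --- that $b^{-1}xb$ could equal $1$ for $x\neq 1$ --- is a non-issue (conjugation is injective); the genuine issue is only for $b$ where $\tau\pi(f^{-1}b)\neq b$, and those indices are charged to the Hamming term, not to the coordinate sum. As you say, once $\Phi$ is in hand the downstream use of Lemmas \ref{isometria sofic}, \ref{Lemma desigualdad linear sofic} and \ref{desigualdad de hiperlineal} is unchanged.
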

\begin{proof}
Let $F\subseteq G$ be a finite subset and let $\varepsilon>0$. Since the action of $G$ on $G/H$ defined by $g. xH := gxH$ is amenable,
 by the  F\o lner's condition (see, for instance, \cite{MR2303529,MR333068}), there exists a finite subset $\overline{B}\subseteq G/H$ such that
  \begin{equation} \label{folnerB}
  \frac{|f.\overline B \bigtriangleup \overline B|}{|\overline B|}\leq \varepsilon/4,\text{ for all } f\in F\cup F^{-1}\cup F^2\cup (F^{-1})^2.
  \end{equation}
Let $\sigma: G \to Sym (\overline{B})$ be defined as
\begin{align*}\label{theta amen}
 \sigma(g)\bar b &:= \begin{cases}  g.\bar{b} &\text{ if } g.\bar{b} \in\overline B \\
        \gamma_g {(g.\bar{b})} &\text{ if not, where }  
       \gamma_g : g\overline{B} \setminus \overline{B} \to \overline{B} \setminus g\overline{B} \text{ is a fixed bijection}. 
         \end{cases}   
\end{align*}
\begin{claim}\label{Hamm-multiplicative}
$\sigma$ is $(F,\varepsilon/4,d_{{\rm Hamm}})$-multiplicative.
\end{claim}
\begin{proof}[Proof of claim]Let $f_1,f_2 \in F$. Define  $S:=\{\overline{b}\in \overline{B} :f_2.\overline{b}\in \overline{B}\}\cap \{\overline{b}\in \overline{B}:
f_1f_2.\overline{b} \in \overline{B}\}$. Note that if $\overline{b}\in S$, then $\sigma(f_1f_2)\bar b = \sigma(f_1)\sigma(f_2)\bar b$.
Moreover,
\begin{align*}
 |\overline{B}|&\geq |\{\overline{b}\in \overline{B} :f_2.\overline{b}\in \overline{B}\}\cup \{\overline{b}\in \overline{B} :f_1f_2.\overline{b}\in \overline{B}\}|\\
&\geq |\{\overline{b}\in \overline{B} :f_2.\overline{b}\in \overline{B}\}|+| \{\overline{b}\in \overline{B} :f_1f_2.\overline{b}\in \overline{B}\}|
- |\{\overline{b}\in \overline{B} :f_2.\overline{b}\in \overline{B}\}\cap \{\overline{b}\in \overline{B} :f_1f_2.\overline{b}\in \overline{B}\}|\\
&\geq 2(1-\varepsilon/8)|\bar B|
- |S|,
\end{align*}
so $\frac{|\overline B\setminus S|}{|\overline B|} \leq   \varepsilon/4$, (in the third inequality we used that, by construction, \eqref{folnerB}
is valid when $f=f_2^{-1}f_1^{-1}$, and that \eqref{folnerB} implies that for every $f\in F^{-1}\cup (F^{-1})^2$, 
$|f\overline B\cap\overline B|\geq (1-\varepsilon/8)|\overline B|$). 
  Then
\begin{align*}
d_{{\rm Hamm}}(\sigma(f_1)&\sigma(f_2),\sigma(f_1f_2))=\frac{1}{|\overline{B}|} \left\vert\left\{ \overline b\in \overline{B}: \sigma(f_1f_2)\overline{b} \neq \sigma(f_1)\sigma(f_2)\overline{b}\right\} \right\vert\\
&=\frac{1}{|\overline{B}|} \left\vert\left\{ \overline b\in S: \sigma(f_1f_2)\overline{b} \neq \sigma(f_1)\sigma(f_2)\overline{b}\right\}\right\vert + \frac{1}{|\overline{B}|} \left\vert\left\{ \overline b\in \overline{B}\setminus S: \sigma(f_1f_2)\overline{b} \neq \sigma(f_1)\sigma(f_2)\overline{b}\right\}\right\vert\\
&\leq 0+\frac{|\overline{B}\setminus S|}{|\overline{B}|}\leq \varepsilon/4.
\qedhere
\end{align*}
\vspace{-0.3cm}\phantom\qedhere
\end{proof}
Let $\pi:G \to G/H$ be the quotient map and let $\tau: G/H \to G$ be a section of it. 
 Call $B:=\tau(\overline{B})$.  Let $F_H := \left(B^{-1} F B\right) \cap H$.
  By  hypothesis,  there exist a  group $(K,d)\in \mathcal C$ of diameter bounded by $1$
 and a function $\varphi: H \to K$  with $\varphi(1)=1$ that is  $(F_H,\varepsilon/3,d)$-multiplicative and $(F_{H},\varepsilon/3, d)$-free for certain function $\rho$, independent of $K$.
 Define
\begin{align}
G &\to K \wr_B Sym(B)\\
\nonumber \Phi(g) &:= \left( \left( \varphi\left( b^{-1}g \tau \pi(g^{-1}b)\right)\right)_{b\in B}, \tau\sigma(g)\pi \right).
\end{align}

$\Phi$ is well defined since  $b^{-1}g \tau \pi(g^{-1}b)\in H$ and it is clear that $\Phi(1)=1$. 
(The idea of using $\varphi\left( b^{-1}g \tau \pi(g^{-1}b)\right)$ is already present in \cite{ELE03}).
\\
{\bf Claim:} $\Phi$ is $(F, \varepsilon, \tilde{d})$-multiplicative and 
$(F, \varepsilon, \tilde{d})$-free for the function $\tilde\rho(\varepsilon)=(1-\varepsilon/3) \rho(\varepsilon/3)$.

Let us first prove that $\Phi$ is $(F, \varepsilon, \tilde{d})$-multiplicative.
 In order to shorter the length of the equation lines, call $\psi_b(g) :=\varphi\left( b^{-1}g \tau \pi(g^{-1}b)\right)$.  Let $f_1,f_2\in F$.
  Then
\begin{align*}
\Phi(f_1f_2) =  \left( \left( \psi_b(f_1f_2)\right)_{b\in B}, \tau\sigma(f_1f_2)\pi \right)= \left( \left( \varphi\left( b^{-1}f_1f_2 \tau\pi((f_1f_2)^{-1}b)\right)\right)_{b\in B}, \tau\sigma(f_1f_2)\pi \right)
\end{align*}
and
\begin{align*}
\Phi(f_1)\Phi(f_2) &=
\left( \left(\psi_b(f_1)\right)_{b\in B}, \tau\sigma(f_1)\pi\right)\left( \left(\psi_b(f_2)\right)_{b\in B}, \tau\sigma(f_2)\pi\right)
\\
&
=\left(\left(\psi_b(f_1) \psi_{\tau\sigma(f_1^{-1})\pi(b)}(f_2)\right)_{b\in B}, \tau\sigma(f_1)\sigma(f_2)\pi\right)
\end{align*}
Let $B_{f_1,f_2}:= \{b \in B: f_1^{-1}.\pi (b)\in  \overline{B}\}\cap\{b \in B: (f_1f_2)^{-1}.\pi (b)\in  \overline{B}\}$. Observe that if $b\in B_{f_1,f_2}$, then by the definition of $\sigma$, 
it holds that $\sigma(f_1^{-1})\pi (b)=f_1^{-1}.\pi (b)$  and
by the definition of the left regular action, it holds that $f_1^{-1}.\pi (b)= \pi(f_1^{-1} b)$ and $(f_1f_2)^{-1}.\pi (b)= \pi((f_1f_2)^{-1} b)\in  \overline{B}$. With this in mind, it follows that for all $b\in B_{f_1,f_2}$
\begin{align*}
 \psi_{\tau\sigma(f_1^{-1})\pi(b)}(f_2)&=\varphi\left( (\tau\sigma(f_1^{-1})\pi(b))^{-1}f_2 \tau\pi(f_2^{-1} \tau\sigma(f_1^{-1})\pi(b))\right)\\
&=
  \varphi\left( (\tau\pi(f_1^{-1} b))^{-1}f_2 \tau\pi(f_2^{-1} \tau \pi(f_1^{-1}b))\right)\\
 & =
   \varphi\left( (\tau\pi(f_1^{-1} b))^{-1}f_2 \tau\pi((f_1f_2)^{-1}b)\right).
 \end{align*} 
Since for all $b\in B_{f_1,f_2}$ it holds that $ b^{-1}f_1 \tau\pi(f_1^{-1}b)$ and $(\tau\pi(f_1^{-1} b))^{-1}f_2 \tau\pi((f_1f_2)^{-1}b)$ belong to $F_H$ and since $\varphi$ is $(F_H,\varepsilon/3,d)$-multiplicative, then
\[d\left(\psi_b(f_1) \psi_{\tau\sigma(f_1^{-1})\pi(b)}(f_2),\psi_b(f_1f_2)\right)<\varepsilon/3, \text{ for all }  b \in B_{f_1,f_2}.  \]
Hence, partitioning  $B$ with the subsets $(\tau\sigma(f_1f_2)\pi)^{-1}B_{f_1,f_2}$ and $B\setminus (\tau\sigma(f_1f_2)\pi)^{-1}B_{f_1,f_2}$ we get that
\begin{align*}\label{suma acotada s}
\tilde d(&\Phi (f_1)\Phi(f_2),\Phi(f_1f_2)) = d_{{\rm{Hamm}}}(\tau \sigma(f_1)\sigma(f_2)\pi, \tau\sigma(f_1f_2)\pi)+\\
&\frac{1}{| B|}\sum_{\substack {b\in B \\ \tau \sigma(f_1f_2)\pi(b)
 = 
 \tau\sigma(f_1)\sigma(f_2)\pi(b)}} d\left(\psi_{\tau \sigma(f_1f_2)\pi(b)}(f_1) \psi_{\tau\sigma(f_1^{-1})\sigma(f_1f_2)\pi(b)}(f_2),\psi_{\tau \sigma(f_1f_2)\pi(b)}(f_1f_2)\right)  \nonumber \\
&\leq  d_{{\rm{Hamm}}}(\tau \sigma(f_1)\sigma(f_2)\pi, \tau\sigma(f_1f_2)\pi)
+ 
\frac{1}{ |B|}\sum_{ b \in B_{f_1,f_2}}d\left(\psi_{b}(f_1) \psi_{\tau\sigma(f_1^{-1})\pi(b)}(f_2),\psi_{b}(f_1f_2)\right) \nonumber \\
&\,\,\,\,\,\,\,\,\,\,\,\,\,\,\,\,\,+ \frac{| B \setminus (\tau\sigma(f_1f_2)\pi)^{-1}B_{f_1,f_2}|}{| B|}.\nonumber
\end{align*}
Since $\tau$ is a bijection from $ \overline{B}$ to $B$ with inverse $\pi$, it follows that
 $$d_{{\rm Hamm}}(\tau\sigma(f_1)\sigma(f_2)\pi,\tau \sigma(f_1f_2)\pi)=d_{{\rm Hamm}}(\sigma(f_1)\sigma(f_2),\sigma(f_1f_2))\leq \varepsilon/4,$$
and that 
$$  \frac{| B \setminus (\tau\sigma(f_1f_2)\pi)^{-1}B_{f_1,f_2}|}{| B|}=\frac{| B \setminus B_{f_1,f_2}|}{| B|}=
\frac{|  \overline{B} \setminus \pi(B_{f_1,f_2})|}{|  \overline{B}|}.$$
Moreover, since $\pi(B_{f_1,f_2})=\{\bar b \in \overline{B}:f_1^{-1}.\bar b\in \overline{B}\}\cap \{\bar b \in\bar B:(f_1f_2)^{-1}.\bar b\in \overline{B}\}$, the first part of the proof of Claim \ref{Hamm-multiplicative} shows that $\frac{|  \overline{B} \setminus \pi(B_{f_1,f_2})|}{|  \overline{B}|}\leq \varepsilon/4$.
 Hence

$$\tilde d(\Phi (f_1)\Phi(f_2),\Phi(f_1f_2))<\varepsilon/4+\varepsilon/3+\varepsilon/4<\varepsilon.$$

In order to prove  that $\Phi$ is $(F,\varepsilon,\tilde d)$-free for the function $\tilde\rho(\varepsilon)=(1-\varepsilon/3)\rho(\varepsilon/3)$, take $f\in F\setminus \{1\}$ and define $B_f:=\{ b \in B : \sigma(f)\pi (b) = f.\pi(b)=\pi(fb)\}$. It holds that $|B_f|=|\{\bar b\in \overline B: f.\bar b\in \overline B\}|=|f^{-1}.\overline B\cap \overline B|\geq (1-\varepsilon/8)|B|$.
Observe that if $b\in B_f$ and $\tau \sigma(f)\pi(b)=b$ then  $\tau \pi(f^{-1}b)=b$.
 Also observe that 
 for all $b\in B$ such that $b^{-1}f b\in H$,  it holds that $d(\varphi(b^{-1}f b) ,1)>\rho(\varepsilon/3)$. 
 Moreover, since ${\rm diam}(K)\leq 1$ it follows that $\rho(\varepsilon)\leq 1$ for all $\varepsilon$.
With all this in mind, we proceed to bound $\tilde{d}(\Phi(f),1)$.
\begin{align*}
\tilde{d}(\Phi(f),1)
&=  d_{{\rm{Hamm}}}(\tau \sigma(f)\pi,1)+\frac{1}{ |B|}\sum_{\substack{ b \in B\\ \tau\sigma(f)\pi( b )=b}}d(\varphi(b^{-1}f\tau\pi(f^{-1}b)) ,1)\\
&\geq\frac{1}{|B|} |\{ b \in B_f: \tau\sigma(f)\pi( b) \neq b\}|+
   \frac{1}{| B|}\sum_{\substack{b \in B_{f}\\ \tau\sigma(f)\pi( b )=b}}d(\varphi(b^{-1}f b) ,1)\\
&\geq\frac{1}{|B|} |\{ b \in B_f: \tau\sigma(f)\pi( b) \neq b\}| + \frac{\rho(\varepsilon/3)}{|B|}| \{b\in B_f: \tau\sigma(f)\pi( b) = b\}|\\
&=\frac{1}{|B|} |\{ b \in B_f: \tau\sigma(f)\pi( b) \neq b\}| + \frac{\rho(\varepsilon/3)}{| B|}(|B_f|- |\{ b \in B_f: \tau\sigma(f)\pi( b) \neq b\}|)\\
&= \frac{\rho(\varepsilon/3)}{|B|}|B_f| + \frac{1-\rho(\varepsilon/3)}{|B|}|\{ b \in B_f: \tau\sigma(f)\pi( b) \neq b\}|\\
& \geq\frac{|B_f|}{|B|}\rho(\varepsilon/3)\geq (1-\varepsilon/8)\rho(\varepsilon/3)>(1-\varepsilon/3)\rho(\varepsilon/3).\qedhere
\end{align*}
\vspace{-0.3cm}\phantom\qedhere
\end{proof}
\textbf{Acknowledgments.} J. Brude was supported in part by a CONICET Doctoral Fellowship. R. Sasyk was supported in part through the grant PIP-CONICET 11220130100073CO. We thank Prof. Goulnara Arzhantseva for her comments and for clarifying a historical inaccuracy in the first draft of this article.
We thank the referee for his o her careful reading of the manuscript, in particular Section \ref{S5} was added thanks to his/her comments. 

%
%
%
%
%
%
\end{document}